\theoremstyle{plain} \numberwithin{equation}{section}
\newtheorem{thm}{Theorem}[section]
\newtheorem{theorem}[thm]{Theorem}
\newtheorem{lemma}[thm]{Lemma}
\newtheorem{corollary}[thm]{Corollary}
\newtheorem{example}[thm]{Example}
\newtheorem{definition}[thm]{Definition}
\begin{document}
\setcounter{page}{1}

\title[Some properties of isoclinism in Lie superalgebras]{Some properties of isoclinism in Lie superalgebras}

\author[Nayak]{Saudamini Nayak}
\address{Harish-Chandra Research Institute (HBNI) \\
         Chhatnag Road, Jhunsi, 
          Allahabad-211 019 \\
                India}
\email{ anumama.nayak07@gmail.com}

\thanks{Research supported by the Harish-Chandra Research Institute Postdoctoral Fellowship.}
\author[Padhan]{Rudra Narayan Padhan}
\address{Department of Mathematics, National Institute of Technology  \\
         Rourkela,
          Odisha-769028 \\
                India}
\email{rudra.padhan6@gmail.com}

\author[Pati]{Kishor Chandra Pati}
\address{Department of Mathematics, National Institute of Technology  \\
         Rourkela, 
          Odisha-769028 \\
                India}
\email{kcpati@nitrkl.ac.in}

\subjclass[2010]{Primary 17B30; Secondary 17B05.}
\keywords{ Lie Superalgebra; Isoclinism; Factor set; covers }
\maketitle
\begin{abstract}
Isoclinism of Lie superalgebras has been defined and studied currently. In this article it is shown that for finite dimensional Lie superalgebras of same dimension, the notation of isoclinism and isomorphism are equivalent. Furthermore we show that covers of finite dimensional Lie superalgebras are isomorphic using isoclinism concept. 
\end{abstract}
\section{Introduction}
In 1940, P. Hall  introduced an equivalence relation on the class of all groups called isoclinism, which is weaker than isomorphism and plays an important role in classification of finite $p$-groups \cite{Kar1987}. In 1994, K. Moneyhun \cite{Moneyhun1994, MoneyhunD1994} gave a Lie algebra analogue of the concept of isoclinism. Further Saeedi and Veisi \cite{Saeedi2014} have defined the same notation for $n$-Lie algebras. Similarly, isoclinism has been defined and studied for Lie superalgebras recently \cite{Nayak2018}.
\begin{definition}
Let $L$ and $K$ be two Lie superalgebras, $\varphi: \frac{L}{Z(L)}\longrightarrow \frac{K}{Z(K)}$ and $\theta: L' \longrightarrow K'$ be Lie superalgebra homomorphisms such that the following diagram is commutative,
\begin{center}
  \begin{tikzpicture}[>=latex]
\node (x) at (0,0) {\(L/Z(L)\times L/Z(L) \)};
\node (z) at (0,-3) {\(K/Z(K)\times K/Z(K)\)};
\node (y) at (3,0) {\(L'\)};
\node (w) at (3,-3) {\(K'\)};
\draw[->] (x) -- (y) node[midway,above] {$\mu$};
\draw[->] (x) -- (z) node[midway,left] {$\varphi ^{2} $};
\draw[->] (z) -- (w) node[midway,below] {$\rho$};
\draw[->] (y) -- (w) node[midway,right] {$\theta$};
\end{tikzpicture}\\
 \end{center} 
where $\mu( (\bar{l}, \bar{m})):=[l, m]$ for $ l, m \in L $ and similarly for $\rho( (\bar{r}, \bar{s})):= [r,s]$ for $r, s \in K$. Or, equivalently $\varphi$ and $\theta$ are defined in such a way that they are compatible, i.e., $\theta([l, m]) = [k, r]$, where $l, k, m, r \in L$ in which $k \in \varphi(l +Z(L))$ and $ r \in \varphi(m +Z(L))$.
Then the pair $(\varphi, \theta)$ is called {\it homoclinism} and if they are both isomorphisms, then $(\varphi, \theta)$ is called {\it isoclinism}.
\end{definition}
 If $(\varphi, \theta)$ is an isoclinism between $L$ and $K$, then $L$ and $K$ are said to be isoclinic, which is denoted by $L\sim K$. Obviously isoclinism is an equivalence relation, and hence it produces a partition on the class of all Lie superalgebras into equivalence classes called isoclinism classes. 
 
Here we listed some results on isoclinism of Lie superalgebras which are useful for us here and for details one can see \cite{Nayak2018}.
\begin{lemma}\label{Lem1}
If $L$ is a Lie superalgebra and $A$ is a abelian Lie superalgebra, then $L\sim L \oplus A$.
\end{lemma}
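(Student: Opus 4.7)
The plan is to explicitly construct an isoclinism pair $(\varphi,\theta)$ between $L$ and $L\oplus A$ using the natural quotient and inclusion maps arising from the fact that $A$ sits inside the center of $L\oplus A$.

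First I would compute the two structural invariants that appear in the isoclinism diagram. Because $A$ is abelian and the bracket on the direct sum $L\oplus A$ is defined componentwise with $[L,A]=0$, a short calculation gives
\[
(L\oplus A)' = L'\oplus 0 \;\cong\; L', \qquad Z(L\oplus A) = Z(L)\oplus A.
\]
Both equalities respect the $\mathbb{Z}_2$-grading, since $A=A_{\bar 0}\oplus A_{\bar 1}$ and commutes with all of $L\oplus A$. Consequently,
\[
\frac{L\oplus A}{Z(L\oplus A)} \;=\; \frac{L\oplus A}{Z(L)\oplus A} \;\cong\; \frac{L}{Z(L)}
\]
via the assignment $(l,a)+Z(L\oplus A)\mapsto l+Z(L)$.

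Next I would define the candidate pair. Let $\varphi: L/Z(L)\longrightarrow (L\oplus A)/Z(L\oplus A)$ be the graded isomorphism sending $l+Z(L)$ to $(l,0)+Z(L\oplus A)$; it is well-defined, injective, and surjective by the identification above. Let $\theta: L'\longrightarrow (L\oplus A)'$ be the graded isomorphism $x\mapsto (x,0)$. Both maps preserve the $\mathbb{Z}_2$-grading and are clearly bijective.

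Finally I would verify the commutativity condition of the homoclinism diagram: for $l,m\in L$,
\[
\theta(\mu(\bar l,\bar m)) = \theta([l,m]) = ([l,m],0) = [(l,0),(m,0)] = \rho(\varphi(\bar l),\varphi(\bar m)),
\]
so the square commutes, and since $\varphi,\theta$ are isomorphisms we conclude $L\sim L\oplus A$. The argument is largely bookkeeping; the only point requiring a little care is checking that the maps preserve the $\mathbb{Z}_2$-grading, which is immediate from the componentwise definition of the direct sum, so I do not foresee any serious obstacle.
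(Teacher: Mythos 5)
Your construction is correct: the identities $(L\oplus A)'=L'\oplus 0$ and $Z(L\oplus A)=Z(L)\oplus A$ hold, the maps $\varphi$ and $\theta$ are well-defined even isomorphisms, and the square commutes, so $(\varphi,\theta)$ is indeed an isoclinism. The paper states this lemma without proof, quoting it from \cite{Nayak2018}, and your explicit construction is exactly the natural argument one would expect there, so there is nothing to add.
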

\begin{lemma}\label{Lem2}
Let $L$ be a Lie superalgebra and $I$ be a graded ideal. Then 
 $L/I \sim L/(I \cap L')$. In particular, if $I \cap L' = 0$ then $L \sim L/I$. Conversely, if $L'$ is finite dimensional and $L \sim L/I$, then $ I \cap L'=0.$
\end{lemma}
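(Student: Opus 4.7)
My plan is to reduce the first claim to the following general fact: if $N$ is a graded ideal of a Lie superalgebra $M$ with $N \subseteq Z(M)$ and $N \cap M' = 0$, then the quotient map induces an isoclinism $M \sim M/N$. Setting $M = L/(I\cap L')$ and $N = I/(I\cap L')$, the third isomorphism theorem gives $M/N \cong L/I$, so this reduction would immediately yield $L/(I\cap L') \sim L/I$. The ``in particular'' clause is then the specialization $I\cap L' = 0$, for which $L/(I\cap L') = L$.

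To carry out the reduction, I first check the two structural hypotheses in our specific $M, N$. Centrality $N \subseteq Z(M)$ follows from $[I,L]\subseteq I\cap L'$, which makes these brackets vanish in $M$. The intersection $N\cap M' = 0$ follows from the identification $M' = L'/(I\cap L')$, whence $N\cap M' = (I\cap L')/(I\cap L') = 0$. The key step is then the identity $Z(M/N) = Z(M)/N$: if $x + N$ lies in $Z(M/N)$, then $[x,M]\subseteq N$, but $[x,M]$ also lies in $M'$, so $[x,M]\subseteq N\cap M' = 0$, placing $x$ in $Z(M)$. With this identity established, the projection $\pi: M \to M/N$ induces a well-defined isomorphism $\varphi: M/Z(M)\to (M/N)/Z(M/N)$; and $\theta := \pi|_{M'}: M'\to (M/N)'$ is an isomorphism because $(M/N)' = (M'+N)/N \cong M'/(M'\cap N) = M'$. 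Compatibility of $(\varphi,\theta)$ with the bracket maps $\mu$ and $\rho$ is automatic because both maps descend from the single homomorphism $\pi$.

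For the converse, assume $L'$ is finite dimensional and that an isoclinism $L\sim L/I$ is given, with bracket-level component $\theta: L'\to (L/I)'$ an isomorphism. Since $(L/I)' = (L'+I)/I \cong L'/(L'\cap I)$, this yields $\dim L' = \dim L' - \dim(L'\cap I)$, forcing $L'\cap I = 0$. The main obstacle in the whole argument is the identity $Z(M/N) = Z(M)/N$: everything downstream is formal, but this identity is what upgrades the obvious surjection to a genuine isomorphism of centre quotients, and it is precisely where the hypothesis $N\cap M' = 0$ is used.
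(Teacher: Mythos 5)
Your argument is correct. Note, however, that this lemma is one of the results the paper quotes from \cite{Nayak2018} without proof, so there is no in-paper proof to compare against; judged on its own, your proposal is a complete and valid derivation. The reduction to $M=L/(I\cap L')$, $N=I/(I\cap L')$ is the natural one, and the two structural facts you verify ($[I,L]\subseteq I\cap L'$ giving $N\subseteq Z(M)$, and $N\cap M'=(I\cap L')/(I\cap L')=0$) are exactly what is needed; the key identity $Z(M/N)=Z(M)/N$ is proved correctly from $N\cap M'=0$, and the converse correctly uses finite dimensionality of $L'$ via $\dim L'=\dim L'-\dim(L'\cap I)$. One remark worth making: the ``general fact'' you establish (a surjection whose kernel meets the derived subalgebra trivially induces an isoclinism) is precisely Lemma \ref{Lem2'} of the paper, and the centrality hypothesis you impose on $N$ is automatic there, since $[N,M]\subseteq N\cap M'=0$. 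So you could shorten your write-up by citing Lemma \ref{Lem2'} applied to the natural surjection $L/(I\cap L')\rightarrow L/I$, after your computation that its kernel meets the derived subalgebra trivially; what your longer version buys is that it makes the whole chain self-contained, including an explicit construction of the pair $(\varphi,\theta)$.
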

\begin{lemma}\label{Lem2'}
Let $L$ and $M$ be Lie superalgebras and $f: L \longrightarrow M$ be an onto homomorphism such that $\mbox{Ker}(f) \cap L'=0$, then $L \sim M$.
\end{lemma}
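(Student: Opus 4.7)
The plan is to deduce this lemma directly from Lemma \ref{Lem2} by routing through the first isomorphism theorem. Set $I = \ker(f)$. Because $f$ is a homomorphism of Lie superalgebras, it respects the $\Z_2$-grading, and so $I$ is a graded ideal of $L$; moreover, by hypothesis $I \cap L' = 0$. Therefore the assumption of the ``in particular'' part of Lemma \ref{Lem2} is satisfied, and we obtain $L \sim L/I$.

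Next I would invoke the first isomorphism theorem for Lie superalgebras: since $f: L \to M$ is surjective with kernel $I$, the induced map $\bar f : L/I \to M$ defined by $\bar f(l + I) = f(l)$ is an isomorphism of Lie superalgebras. Any isomorphism is trivially an isoclinism (take $\varphi$ and $\theta$ to be the maps induced on $L/Z(L) \to M/Z(M)$ and $L' \to M'$ respectively; isomorphisms send centers to centers and derived subalgebras to derived subalgebras, and the compatibility square commutes tautologically). Hence $L/I \sim M$.

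Finally, since isoclinism is an equivalence relation on the class of Lie superalgebras, transitivity applied to $L \sim L/I$ and $L/I \sim M$ yields $L \sim M$, as desired.

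There is essentially no obstacle here: the only small point to verify carefully is that $\ker(f)$ is genuinely a graded ideal (so that Lemma \ref{Lem2} applies) and that an isomorphism of Lie superalgebras canonically induces an isoclinism pair $(\varphi,\theta)$; both are routine. The real content of the lemma is packaged inside Lemma \ref{Lem2}, and this statement is simply its functorial repackaging via the first isomorphism theorem.
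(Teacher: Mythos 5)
Your argument is correct. Note that the paper itself does not prove this lemma; it is quoted from the reference on isoclinism of Lie superalgebras, so there is no in-paper proof to compare against. Your reduction is legitimate: $\ker(f)$ is a graded ideal (it is an ideal since $f$ is a bracket-preserving map, and it is graded because $f$ has even degree, so the homogeneous components of any element of the kernel again lie in the kernel), the ``in particular'' clause of Lemma \ref{Lem2} gives $L \sim L/\ker(f)$, the first isomorphism theorem gives $L/\ker(f) \cong M$, an isomorphism induces an isoclinism pair in the obvious way, and transitivity finishes the proof. The more common route in the literature (and presumably in the cited source) is a direct construction that avoids Lemma \ref{Lem2} altogether: one checks that $f(Z(L)) \subseteq Z(M)$ and that $f(l) \in Z(M)$ forces $[l,x] \in \ker(f) \cap L' = 0$ for all $x$, hence $l \in Z(L)$, so $f$ induces an isomorphism $\varphi: L/Z(L) \to M/Z(M)$; likewise $\theta = f|_{L'}: L' \to M'$ is onto with kernel $\ker(f) \cap L' = 0$, and compatibility $\theta([l,m]) = [f(l), f(m)]$ is immediate. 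The direct proof exhibits the isoclinism pair explicitly and is self-contained, whereas your version is shorter but inherits whatever work goes into Lemma \ref{Lem2}; since that lemma is available to you, either is acceptable.
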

A Lie superalgebra $L$ is called a stem Lie superalgebra, whenever $Z(L) \subseteq L'$. The following properties of stem Lie superalgebras can be easily seen \cite{Nayak2018}.
\begin{lemma}\label{Lem3}
Suppose  $\mathcal{C}$ is an isoclinic family of Lie superalgebras. Then
\begin{enumerate}
\item[$(1)$] $\mathcal{C}$ contains a stem Lie superalgebra.
\item[$(2)$] Each finite dimensional Lie superalgebra $T \in \mathcal{C}$ is stem if and only if $T$ has minimal dimension in $\mathcal{C}$.
\end{enumerate}
\end{lemma}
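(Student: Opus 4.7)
The plan is to derive both parts from a single quotient construction. Given any $L \in \mathcal{C}$, choose a graded complement $W$ of $Z(L) \cap L'$ inside $Z(L)$, so that $Z(L) = (Z(L) \cap L') \oplus W$ and $W \cap L' = 0$. Since $W \subseteq Z(L)$ it is automatically a graded ideal, and Lemma~\ref{Lem2} gives $L \sim L/W$, so $L/W \in \mathcal{C}$. To see $L/W$ is stem I first verify $Z(L/W) = Z(L)/W$: if $[x, L] \subseteq W$ then each bracket $[x, y]$ lies in $W \cap L' = 0$, forcing $x \in Z(L)$. Combined with $Z(L) \subseteq (Z(L) \cap L') + W \subseteq L' + W$, this gives $Z(L/W) = Z(L)/W \subseteq (L' + W)/W = (L/W)'$, settling~(1).

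For (2), the main technical step is to show that any isoclinism $(\varphi, \theta)\colon K \to T$ restricts via $\theta$ to an isomorphism $Z(K) \cap K' \to Z(T) \cap T'$. Given $z = \sum_i [k_i, l_i] \in K'$ and $t \in T$, pick $k \in K$ with $t + Z(T) = \varphi(k + Z(K))$ and representatives $t_i, s_i$ of $\varphi(k_i + Z(K))$ and $\varphi(l_i + Z(K))$. Because $\varphi$ is a superalgebra homomorphism on $K/Z(K)$, the element $[t_i, s_i]$ is a representative of $\varphi([k_i, l_i] + Z(K))$, so compatibility applied to the bracket $[[k_i, l_i], k]$ yields $\theta([[k_i, l_i], k]) = [[t_i, s_i], t]$. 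Bilinearity and linearity of $\theta$ then give
\[
[\theta(z), t] = \sum_i [[t_i, s_i], t] = \theta\Bigl(\sum_i [[k_i, l_i], k]\Bigr) = \theta([z, k]).
\]
Since $\varphi$ is surjective and $\theta$ is injective, $\theta(z)$ is central in $T$ iff $z$ is central in $K$; applying the same argument to $(\varphi^{-1}, \theta^{-1})$ yields the claimed bijection.

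With this correspondence in hand, (2) reduces to a dimension count. For any $K \in \mathcal{C}$,
\[
\dim K = \dim(K/Z(K)) + \dim Z(K) = \dim(T/Z(T)) + \dim Z(K),
\]
and $\dim Z(K) \geq \dim(Z(K) \cap K') = \dim(Z(T) \cap T')$. If $T$ is stem then $Z(T) \subseteq T'$, so the last quantity equals $\dim Z(T)$, giving $\dim K \geq \dim T$; hence stem forces minimal dimension. Conversely, if $T$ is of minimal dimension, the construction of~(1) applied to $T$ produces a stem $T/W \in \mathcal{C}$ of dimension $\dim T - \dim W$, and minimality forces $W = 0$, whence $Z(T) \subseteq T'$.

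The chief obstacle is establishing the identity $[\theta(z), t] = \theta([z, k])$ in part~(2): one must thread the commutative square defining isoclinism through sums of brackets and through the choice of coset representatives in the $\Z_2$-graded setting. Once that identity is in place, the remainder is a routine combination of Lemma~\ref{Lem2} with elementary dimension bookkeeping.
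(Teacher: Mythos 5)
Your proof is correct, and it is essentially the standard argument: the paper does not prove this lemma itself (it is quoted from \cite{Nayak2018}), but your complement-of-$Z(L)\cap L'$ construction for part (1) is exactly the device the paper reuses in the proof of Theorem \ref{Th2}, and your key identity $[\theta(z),t]=\theta([z,k])$ is precisely Lemma \ref{Lem4}(2), from which the dimension count in part (2) follows as you describe.
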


The covers and multipliers for Lie algebras are defined and studied by Betten et al. It has been shown that unlike the case of groups the covers of a finite dimensional Lie algebras are isomorphic \cite{Batten1993, BS1996}. 
Likewise covers and multipliers for Lie superalgebras has recently been defined and studied \cite{SNayak2018}. 
\par
An extension of a Lie superalgebra $L$ is a short exact sequence 
\begin{equation}\label{eq0}
0 \longrightarrow  M \longrightarrow K \longrightarrow L \longrightarrow 0. 
\end{equation}
Since $e: M \longrightarrow e(M) = \ker(f)$ is an isomorphism we identify $M$ and $e(M)$. An extension of $L$ is then same as an epimorphism $f: K \longrightarrow L$. If $L$ is a Lie superalgebra generated by a $\mathbb{Z}_{2}$-graded set $X = X_{\bar{0}} \cup X_{\bar{1}}$  and $\phi : X \rightarrow L$ is a degree zero map,
then there exists a free Lie superalgebra $F$ and  $\psi: F \rightarrow L$ extending $\phi$. Let $R = \mbox{Ker} (\psi)$. The extension 
\begin{equation}\label{eq1}
0 \longrightarrow R \longrightarrow F \longrightarrow L \longrightarrow 0
\end{equation} 
is called a {\it free presentation} of $L$ and is denoted by $(F, \psi)$. With this free presentation of $L$, we define {\it multiplier} of $L$ as  
$$ 
\mathcal{M}(L) = \frac{[F,F]\cap R}{[F, R]}.
$$

\par

A homomorphism from an extension $f: K \longrightarrow L$ to another extension $f': K' \longrightarrow L$ is a Lie superalgebra homomorphism $g: K \longrightarrow K'$ satisfying $f = f' \circ g$; in other words, we have the following commutative diagram, 
\begin{center}
\begin{tikzpicture}[>=latex]
\node (K) at (0,0) {\(K\)};
\node (K^{'}) at (0,-2) {\(K^{'}\)};
\node (L) at (2.5,0) {\(L\)};
\draw[->] (K) -- (L) node[midway,above] {$f$};
\draw[->] (K) -- (K^{'}) node[midway,left] {$g$};
\draw[->,] (K^{'}) -- (L) node[midway,below] {$f^{'}$};
\end{tikzpicture}\\
\end{center}
A {\it central extension} of $L$ is an extension \eqref{eq0} such that $M \subseteq Z(K)$. The central extension is said to be a {\it stem extension} of $L$ if $M \subseteq Z(K)\cap K' $. Finally, we call the stem extension a {\it stem cover} if $M \cong \mathcal{M}(L)$ and in this case $K$ is said to be a cover of Lie superalgebra $L$. 
\begin{lemma} \label{Lem3'} \cite{Nayak2018}
Stem covers exist for each Lie superalgebras.
\end{lemma}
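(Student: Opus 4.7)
The plan is to adapt the standard construction from the Lie algebra setting (as in Batten--Stitzinger) to the $\mathbb{Z}_2$-graded case. Starting from any free presentation $0 \longrightarrow R \longrightarrow F \longrightarrow L \longrightarrow 0$ of $L$ as in \eqref{eq1}, I would manufacture the stem cover as a quotient $F/S$, where $S$ is a carefully chosen graded ideal lying between $[F,R]$ and $R$.

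First I would pass to $F/[F,R]$ and observe the basic structural facts: $R/[F,R]$ is a graded subspace that is central in $F/[F,R]$ (because $[F,R] \subseteq [F,R]$), and $([F,F]\cap R)/[F,R] = \mathcal{M}(L)$ is a graded subspace of it. Since we are working with $\mathbb{Z}_2$-graded vector spaces, I can pick a graded complement: choose a graded subspace $S/[F,R] \subseteq R/[F,R]$ such that
\begin{equation*}
R/[F,R] = \bigl(([F,F]\cap R)/[F,R]\bigr) \oplus \bigl(S/[F,R]\bigr).
\end{equation*}
Because $[F,R] \subseteq S \subseteq R$ and $S$ is graded, $S$ is automatically a graded ideal of $F$ (indeed $[F,S] \subseteq [F,R] \subseteq S$).

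Next I would set $K := F/S$ and $M := R/S$. The short exact sequence $0 \to M \to K \to F/R \to 0$ then realizes $L \cong F/R$, so $(K, M)$ is a central extension of $L$; centrality follows from $[F,R] \subseteq S$, which gives $[K, M] = 0$, i.e.\ $M \subseteq Z(K)$. The complement decomposition above yields a natural isomorphism
\begin{equation*}
M = R/S \;\cong\; \bigl(([F,F]\cap R) + S\bigr)/S \;\cong\; ([F,F]\cap R)/[F,R] = \mathcal{M}(L),
\end{equation*}
and the image $(([F,F]\cap R)+S)/S$ sits inside $([F,F]+S)/S = K'$, so $M \subseteq Z(K) \cap K'$. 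Thus $(K,M)$ is a stem cover in the sense of the definition just given.

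The main subtlety, and essentially the only place where the super setting must be checked carefully, is ensuring that all the subspaces chosen respect the $\mathbb{Z}_2$-grading: the multiplier $\mathcal{M}(L)$, the free presentation, and the complement $S/[F,R]$ must all be graded. This is straightforward because $R$, $[F,F]$, and $[F,R]$ are graded ideals of a graded algebra, and any graded subspace of a graded vector space admits a graded complement (split the even and odd parts separately). Once the grading is handled, the rest of the verification is purely formal from the commutator identities and the definition of $\mathcal{M}(L)$.
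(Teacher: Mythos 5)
Your construction is correct: passing to $F/[F,R]$, choosing a graded complement $S/[F,R]$ to $([F,F]\cap R)/[F,R]$ in $R/[F,R]$, and taking $K=F/S$, $M=R/S$ does yield a central extension with $M\subseteq Z(K)\cap K'$ and $M\cong\mathcal{M}(L)$, and the grading issues are handled as you say. The present paper gives no proof of this lemma (it simply cites the reference for isoclinism of Lie superalgebras), and your argument is exactly the standard free-presentation/graded-complement construction used there and in the Lie algebra case of Batten--Stitzinger, so there is nothing further to flag.
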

\begin{definition}
Let \eqref{eq0} be a stem extension of $L$.
The stem extension is {\it maximal} if every homomorphism of any other stem extension of $L$ onto $0 \longrightarrow M \longrightarrow  K \longrightarrow   L \longrightarrow 0$ is necessarily an isomorphism. 
\end{definition} 
The following is an important result which we use further.
\begin{lemma} \cite{Nayak2018} \label{Lem4'}
 The stem extension $$0 \longrightarrow M \longrightarrow  K \longrightarrow   L \longrightarrow 0$$ of finite dimensional Lie superalgebra $L$ is maximal (or, equivalently dimension of $K$ is maximal) if and only if it is a stem cover of $L$.
\end{lemma}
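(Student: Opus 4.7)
The plan is to prove the chain (stem cover) $\Leftrightarrow$ ($\dim K$ is maximal among stem extensions of $L$) $\Leftrightarrow$ (maximal in the homomorphism sense), anchored by the fixed free presentation $0\to R\to F\to L\to 0$ of $L$. The key intermediate step is that for any stem extension $0\to M\to K\to L\to 0$, there is a surjection $\mathcal{M}(L)\twoheadrightarrow M$. Lifting the identity on $L$ through $\psi$ yields $\alpha\colon F\to K$ with $\pi\alpha=\psi$; since $\alpha(R)\subseteq M\subseteq Z(K)$ one gets $\alpha([F,R])=0$, so $\alpha$ descends to $\bar\alpha\colon F/[F,R]\to K$. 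A short argument shows $\alpha$ is surjective: from $\alpha(F)+M=K$ and $M\subseteq K'$, together with the fact that $M$ is abelian (being central), one gets $K'=\alpha(F')$ and hence $M\subseteq\alpha(F)$. Writing any $m\in M\subseteq K'$ as a sum of brackets in $K$ and lifting through $\alpha$ produces a preimage in $F'\cap R$, yielding the desired surjection $\mathcal{M}(L)=(F'\cap R)/[F,R]\twoheadrightarrow M$.

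From this, $\dim M\le\dim\mathcal{M}(L)$ for every stem extension of $L$, with equality iff $M\cong\mathcal{M}(L)$, i.e., iff $K$ is a stem cover; this gives the equivalence (stem cover) $\Leftrightarrow$ ($\dim K$ is maximal). Moreover, if $K$ is a stem cover and $K_{1}\twoheadrightarrow K$ is a compatible surjection from another stem extension, then $\dim K_{1}\le\dim L+\dim\mathcal{M}(L)=\dim K\le\dim K_{1}$, forcing an isomorphism. Hence (stem cover) implies (maximal).

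For the converse, I would construct, inside the free presentation, a stem cover of $L$ together with a compatible surjection onto the given maximal $K$, and then conclude from maximality that $K$ is itself that stem cover. Set $U=\ker(\bar\alpha|_{R/[F,R]})$ and $V=U\cap\mathcal{M}(L)$, and choose a graded linear complement $W$ of $V$ inside $U$. Let $T\subseteq R$ be $[F,R]$ together with a lift of $W$ to $R$. Then $W$ is simultaneously contained in $\ker\bar\alpha$ (so $\alpha(T)=0$ and $\alpha$ descends to a surjection $F/T\twoheadrightarrow K$ of stem extensions of $L$) and a complement of $\mathcal{M}(L)$ in $R/[F,R]$ (so $R/T\cong\mathcal{M}(L)$, making $F/T$ a stem cover). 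Maximality of $K$ then forces $F/T\cong K$, so $K$ is a stem cover.

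The main obstacle I expect is reconciling the two demands on $W$ above: $W\subseteq\ker\bar\alpha$ is needed for the map to descend, while $R/[F,R]=\mathcal{M}(L)\oplus W$ is needed for the target to be a stem cover and not merely a stem extension. These look independent at first, but the surjection $\bar\alpha|_{\mathcal{M}(L)}\colon\mathcal{M}(L)\twoheadrightarrow M$ from the key step implies $\mathcal{M}(L)+U=R/[F,R]$, so that any complement of $V=U\cap\mathcal{M}(L)$ inside $U$ does the job — this is the crux of the argument.
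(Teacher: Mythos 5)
Your proposal is correct: the lift of the free presentation into any stem extension, the resulting surjection $\mathcal{M}(L)\twoheadrightarrow M$, and the crux choice of a graded complement $W$ of $U\cap\mathcal{M}(L)$ inside $U=\ker\bar\alpha$ (which is indeed a complement of $\mathcal{M}(L)$ in $R/[F,R]$ because $\mathcal{M}(L)+U=R/[F,R]$) all check out, and they yield both directions of the equivalence, including the dimension formulation. The paper itself states this lemma without proof, citing \cite{Nayak2018}; your argument is exactly the standard free-presentation proof used there (and in Batten--Stitzinger for Lie algebras), with only routine facts left implicit (existence of graded complements and finite-dimensionality of $\mathcal{M}(L)$ for finite-dimensional $L$, which underlies the statement in any case).
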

Finally below is another useful Lemma.
\begin{lemma}\label{Lem4}
Let the pair $(\varphi,\theta)$ be isoclinism of Lie superalgebras $L$ and $M$. Then
\begin{enumerate}
\item $\varphi(x+Z(L))=\theta(x)+Z(M) $;
\item $\theta([x,y])=[\theta(x),z]$, for all $x \in L',~y \in L,~z+Z(M)=\varphi(y+Z(L))$.
\end{enumerate}
 \end{lemma}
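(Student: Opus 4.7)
The plan is to deduce both parts directly from the compatibility condition built into the definition of isoclinism, using that $\varphi\colon L/Z(L) \to M/Z(M)$ is a Lie superalgebra isomorphism between the quotients and that $L'$ is spanned by simple brackets $[a,b]$ with $a,b\in L$.

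For part~(1), the identity is to be read for $x\in L'$ (otherwise $\theta(x)$ is not defined). Write $x=\sum_i [a_i,b_i]$ with $a_i,b_i\in L$. By the compatibility in the definition of isoclinism, for each $i$ and any choice of representatives $k_i\in\varphi(a_i+Z(L))$ and $r_i\in\varphi(b_i+Z(L))$, one has $\theta([a_i,b_i])=[k_i,r_i]$. Projecting to $M/Z(M)$ and using that $\varphi$ is a Lie superalgebra homomorphism on the quotients,
$$\theta([a_i,b_i])+Z(M)=[k_i+Z(M),\,r_i+Z(M)]=[\varphi(a_i+Z(L)),\varphi(b_i+Z(L))]=\varphi([a_i,b_i]+Z(L)).$$
Summing over $i$ by linearity of $\theta$ and of the quotient map yields $\theta(x)+Z(M)=\varphi(x+Z(L))$.

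For part~(2), I would reuse (1) together with the compatibility condition. Given $x\in L'$ and $y\in L$, part~(1) says $\theta(x)\in M'$ is a representative of the coset $\varphi(x+Z(L))$. By hypothesis $z$ is a representative of $\varphi(y+Z(L))$. Applying the compatibility relation to the bracket $[x,y]$ with these specific representatives $k:=\theta(x)$ and $r:=z$, we obtain $\theta([x,y])=[\theta(x),z]$, as required.

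No serious obstacle is anticipated: both parts are essentially unwrapping the definition, and the only care needed is in bookkeeping of cosets and their representatives. The slight subtlety worth noting is that part~(1) relies on (a) $\theta$ being additive so that the identity on simple brackets extends to their sums, and (b) part~(2) then becoming a one-line application of compatibility because (1) produces a canonical representative $\theta(x)$ for $\varphi(x+Z(L))$ when $x\in L'$.
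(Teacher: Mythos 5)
Your proof is correct and follows essentially the same route as the paper: both parts are obtained by unwinding the compatibility condition (equivalently, the commutative square with $\mu$, $\rho$, $\varphi^2$), with part (1) supplying $\theta(x)$ as a representative of $\varphi(x+Z(L))$ for part (2). Your only deviation is that you explicitly extend part (1) from simple brackets $[a_i,b_i]$ to sums by linearity of $\theta$, a bookkeeping step the paper leaves implicit.
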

\begin{proof}

We have the following commutative diagram,

\begin{center}
  \begin{tikzpicture}[>=latex]
\node (x) at (0,0) {\(L/Z(L)\times L/Z(L) \)};
\node (z) at (0,-3) {\(M/Z(M)\times M/Z(M)\)};
\node (y) at (3,0) {\(L^{'}\)};
\node (w) at (3,-3) {\(M^{'}\)};
\draw[->] (x) -- (y) node[midway,above] {$\mu$};
\draw[->] (x) -- (z) node[midway,left] {$\varphi ^{2} $};
\draw[->] (z) -- (w) node[midway,below] {$\rho$};
\draw[->] (y) -- (w) node[midway,right] {$\theta$};
\end{tikzpicture}.
 \end{center}
 Let $x=[x_{1},x_{2}] \in L'$, then 
\begin{equation} 
\begin{split}
\theta(x)+Z(M)&= \theta[x_{1},x_{2}]+Z(M)\\
 &=\theta\mu(x_{1}+Z(L),x_{2}+Z(L))+Z(M)\\
 &=\rho \varphi^{2}(x_{1}+Z(L),x_{2}+Z(L))+Z(M)\\
  &=\rho \big(\varphi(x_{1}+Z(L)),\varphi(x_{2}+Z(L))\big)+Z(M)\\
  &= [\varphi(x_{1}+Z(L)),\varphi(x_{2}+Z(L))]+Z(M)\\
  &= \varphi[x_{1}+Z(L),x_{2}+Z(L)]=\varphi([x_{1},x_{2}]+Z(L))=\varphi(x+Z(L))
 \end{split}
\end{equation}
which proves $(1)$.
To prove the second part, consider for $y \in L$, we have $z+Z(M)=\varphi(y+Z(L))$.
\begin{equation} 
\begin{split}
\theta([x,y])=\theta\mu(x+Z(L),y+Z(L))
&= \rho \varphi ^{2}(x+Z(L),y+Z(L))\\
 &=\rho \big(\varphi(x+Z(L)),\varphi(y+Z(L))\big)\\
  &= [\varphi(x+Z(L)),\varphi(y+Z(L))]\\
  &=[\theta(x)+Z(M),z+Z(M)]= [\theta(x),z].
\end{split}
\end{equation}
\end{proof}

 In Section \ref{sec1} we recall some notations of Lie superalgebras.
In Section \ref{sec2} we show that isoclinism and isomorphism are equivalent for finite dimensional Lie superalgebras with the same dimension. Specifically we introduced factor set in Lie superalgebras to do the same. This result was first done for Lie algebras by Moneyhun \cite{Moneyhun1994} in 1994 and for $n$-Lie algebras by Eshrati et al. \cite{EMS2016}.  Moreover as an application to concept of isoclinism it is shown that covers for finite dimensional Lie superalgebras are isomorphic in Section \ref{sec3}. The same result hold for Lie algebras case see \cite{Moneyhun1994, MoneyhunD1994, Batten1993, BS1996} and for $n$-Liealgebras see \cite{EMS2016}.
\section{Preliminaries}\label{sec1}
Here we fix some terminology on Lie superalgebras (see \cite{KAC1977, Musson2012}) and recall some notions. Set $\mathbb{Z}_{2}=\{\bar{0}, \bar{1}\}$ is a field. A $\mathbb{Z}_{2}$-graded vector space $V$ is simply a direct sum of vector spaces $V_{\bar{0}}$ and $V_{\bar{1}}$, i.e., $V = V_{\bar{0}} \oplus V_{\bar{1}}$. It is also referred as a superspace. We consider all vector superspaces and superalgebras are over $\mathbb{F}$ (characteristic of $\mathbb{F} \neq 2,3$). Elements in $V_{\bar{0}}$ (resp. $V_{\bar{1}}$) are called even (resp. odd) elements. Non-zero elements of $V_{\bar{0}} \cup V_{\bar{1}}$ are called homogeneous elements. For a homogeneous element $v \in V_{\sigma}$, with $\sigma \in \mathbb{Z}_{2}$ we set $|v| = \sigma$ is the degree of $v$. A vector subspace $U$ of $V$ is called $\mathbb{Z}_2$-graded vector subspace(or superspace) if $U= (V_{\bar{0}} \cap U) \oplus (V_{\bar{1}} \cap U)$. We adopt the convention that whenever the degree function appeared in a formula, the corresponding elements are supposed to be homogeneous. 
\smallskip

A {\it Lie superalgebra} is a superspace $L = L_{\bar{0}} \oplus L_{\bar{1}}$ with a bilinear mapping
$ [., .] : L \times L \rightarrow L$ satisfying the following identities:

\begin{enumerate}
\item $[L_{\alpha}, L_{\beta}] \subset L_{\alpha+\beta}$, for $\alpha, \beta \in \mathbb{Z}_{2}$ ($\mathbb{Z}_{2}$-grading),
\item $[x, y] = -(-1)^{|x||y|} [y, x]$ (graded skew-symmetry),
\item $(-1)^{|x||z|} [x,[y, z]] + (-1)^{ |y| |x|} [y, [z, x]] + (-1)^{|z| |y|} [z,[ x, y]] = 0$ (graded Jacobi identity),
\end{enumerate}
for all $x, y, z \in L$. 

\smallskip
For a Lie superalgebra $L = L_{\bar{0}} \oplus L_{\bar{1}}$, the even part $L_{\bar{0}}$ is a Lie algebra and $L_{\bar{1}}$ is a $L_{\bar{0}}$-module. If $L_{\bar{1}} = 0$, then $L$ is just Lie algebra. But in general a Lie superalgebra is not a Lie algebra. Lie superalgebra without even part, i.e., $L_{\bar{0}} = 0$, is an abelian Lie superalgebra, as $[x, y] = 0$ for all $x, y \in L$. A sub superalgebra of $L$ is a $\mathbb{Z}_{2}$-graded vector subspace which is closed under bracket operation. A $\mathbb{Z}_{2}$-graded subspace $I$ is a {\it graded ideal} of $L$ if $[I,L]\subseteq I$. The ideal $Z(L) = \{z\in L : [z, x] = 0\;\mbox{for all}\;x\in L\}$ is a graded ideal and it is called the {\it center} of $L$. Clearly, if $I$ and $J$ are graded ideals of $L$, then so is $[I, J]$. If $I$ is an ideal of $L$, the quotient Lie superalgebra $L/I$ inherits a canonical Lie superalgebra structure such that the natural projection map becomes a homomorphism.

\smallskip

By a {\it homomorphism} between superspaces $f: V \rightarrow W $ of degree $|f|\in \mathbb{Z}_{2}$, we mean a linear map satisfying $f(V_{\alpha})\subseteq W_{\alpha+|f|}$ for $\alpha \in \mathbb{Z}_{2}$. In particular, if $|f| = \bar{0}$, then the homomorphism $f$ is called homogeneous linear map of even degree. A Lie superalgebra homomorphism $f: L \rightarrow M$ is a  homogeneous linear map of even degree such that $f[x,y] = [f(x), f(y)]$ holds for all $x, y \in L$. The notions of {\it epimorphisms, isomorphisms} and {\it auotomorphisms} have the obvious meaning. Throughout for superdimension of Lie superalgebra $L$ we simply write $\dim(L)=(m\mid n)$, where $\dim L_{\bar{0}} = m$ and $\dim L_{\bar{1}} = n$.
\section{Factor set in Lie Superalgebras}\label{sec2}
The notation for factor set in Lie algebras was defined by Moneyhun in \cite{Moneyhun1994}. Here we start with defining factor set in Lie superalgebras and study some of its properties. 

\begin{definition}
Let $L=L_{\overline{0}}\oplus L_{\overline{1}}$ be a finite dimensional Lie superalgebra over a field $\mathbb{F}$. The bilinear map;
\[r:L/Z(L)\times L/Z(L) \longrightarrow Z(L)\]
is said to be a $factor~set$ if the following conditions are satisfied:
\begin{enumerate}
\item $r(\overline{a},\overline{b})\subseteq Z(L)_{\alpha+\beta}$, $\alpha, \beta \in \mathbb{Z}_{2}$
\item $r(\overline{a},\overline{b})=-(-1)^{deg(\overline{a})deg(\overline{b})}r(\overline{b},\overline{a})$
\item $r([\overline{a},\overline{b}],\overline{c})=r(\overline{a},[\overline{b},\overline{c}])-(-1)^{deg(\overline{a})deg(\overline{b})}r(\overline{b},[\overline{a},\overline{c}])$
\end{enumerate}
for all $\overline{a}, ~ \overline{b},  ~\overline{c} \in L/Z(L)$.
\end{definition}
\begin{lemma} \label{Lem5}
Let $L$ be a Lie superalgebra and $r$ be a factor set on $L$.
\begin{enumerate}
\item The set 
\[R=(Z(L),L/Z(L),r)=\{(x,\overline{a}):~x \in Z(L), \overline{a} \in L/Z(L)\} \]
is a Lie superalgebra under the component-wise addition and the multiplication
\[[(x_{1},\overline{a}),(x_{2},\overline{b})]:=(r(\overline{a},\overline{b}),[\overline{a},\overline{b}]).\]
\item $Z_{R}=\{(x,0) \in R:~x \in Z(L)\} \cong Z(L).$
\end{enumerate}
\end{lemma}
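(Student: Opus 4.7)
The plan is to verify the Lie superalgebra axioms on $R$ one at a time, reading off each axiom from the corresponding factor set condition, and then to exhibit the isomorphism in (2) directly.

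First, I equip $R$ with the grading $R_{\bar\alpha}=Z(L)_{\bar\alpha}\oplus (L/Z(L))_{\bar\alpha}$ for $\bar\alpha\in\mathbb{Z}_{2}$. Condition (1) on the factor set says $r(\bar a,\bar b)\in Z(L)_{\alpha+\beta}$ when $\bar a,\bar b$ are homogeneous of degrees $\alpha,\beta$, while the induced bracket on $L/Z(L)$ is already graded because $L$ is. Together these give $[R_{\alpha},R_{\beta}]\subseteq R_{\alpha+\beta}$ and bilinearity is clear from the definition. Graded skew-symmetry is obtained component-wise: condition (2) handles the first coordinate and the graded skew-symmetry of the bracket on $L/Z(L)$ handles the second.

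The main work is the graded Jacobi identity, which I would prove on homogeneous triples $(x_{1},\bar a),(x_{2},\bar b),(x_{3},\bar c)$ and extend by linearity. Expanding the cyclic Jacobi sum and reading the second coordinate gives precisely the Jacobi sum in $L/Z(L)$, which vanishes. The first coordinate reduces to showing
\begin{equation*}
(-1)^{|a||c|}\,r(\bar a,[\bar b,\bar c])+(-1)^{|a||b|}\,r(\bar b,[\bar c,\bar a])+(-1)^{|b||c|}\,r(\bar c,[\bar a,\bar b])=0.
\end{equation*}
To obtain this, I first apply graded skew-symmetry of the inner bracket together with bilinearity of $r$ to rewrite $r(\bar b,[\bar c,\bar a])$ as $-(-1)^{|a||c|}r(\bar b,[\bar a,\bar c])$; the first two summands then combine into $(-1)^{|a||c|}\bigl(r(\bar a,[\bar b,\bar c])-(-1)^{|a||b|}r(\bar b,[\bar a,\bar c])\bigr)$, which condition (3) identifies with $(-1)^{|a||c|}\,r([\bar a,\bar b],\bar c)$. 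One final application of condition (2) turns this into $-(-1)^{|b||c|}r(\bar c,[\bar a,\bar b])$, cancelling the third summand.

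For part (2), note that $[(x_{1},0),(x_{2},0)]=(r(0,0),[0,0])=(0,0)$, so $Z_{R}$ is an abelian graded subspace of $R$ (and is in fact contained in the center of $R$ by the same calculation with an arbitrary second argument). The assignment $\phi:Z(L)\to Z_{R}$, $x\mapsto (x,0)$, is a grading-preserving linear bijection, and both Lie brackets on its domain and codomain are identically zero, so $\phi$ is an isomorphism of (abelian) Lie superalgebras.

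The main obstacle will be the sign bookkeeping in the Jacobi step: condition (3) is written in super-Leibniz form, while the Jacobi identity presents itself in cyclic form, so the cancellation relies on carefully using condition (2) to convert $r([\bar a,\bar b],\bar c)$ into $r(\bar c,[\bar a,\bar b])$ with the correct sign. Every other step is a direct bilinear extension from the homogeneous case.
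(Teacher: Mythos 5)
Your proof is correct and takes essentially the same approach as the paper: a componentwise verification of the grading, bilinearity, skew-symmetry, and Jacobi identity, with the obvious map $x\mapsto(x,0)$ giving part (2). The only minor variation is that you verify the cyclic form of the graded Jacobi identity, using condition (2) and skew-symmetry of the induced bracket to convert condition (3) into the cyclic relation for $r$ (your sign bookkeeping checks out), whereas the paper checks the equivalent Leibniz form $[[u,v],w]=[u,[v,w]]-(-1)^{|u||v|}[v,[u,w]]$ directly from condition (3).
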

\begin{proof}

Here we have $Z(L)$ is a graded ideal and $L/Z(L)$ is a quotient Lie superalgebra. Hence there is a natural $\mathbb{Z}_{2}$-grading on set $R$. Any $(x,\overline{a}) \in R $, is an even element when $x \in Z(L)_{\bar{0}},~~\overline{a} \in (L/Z(L))_{\overline{0}}$ and it is an odd element when $x \in Z(L)_{\bar{1}}, ~~\overline{a} \in (L/Z(L))_{\overline{1}}$. Now it is easy to check that $R$ is a vector superspace with comonent wise addition and furthermore with the multiplication defined in (1) it is a superalgebra. Now we intend to show that $R$ is a Lie superalgebra. 

The way $[ , ]$ is defined clearly it is a bilinear map.
Let $(x_{1},\overline{a}),(x_{2},\overline{b})$ be two homogeneous elements in $R$, then, $deg(a)=deg((x_{1},\overline{a}))$ and $~deg(b)=deg((x_{2},\overline{b}))$. 
\begin{enumerate}
\item To check graded skew-symmetric property consider
\begin{equation}
\begin{split}
 [(x_{1},\overline{a}),(x_{2},\overline{b})]=(r(\overline{a},\overline{b}),[\overline{a},\overline{b}]) & = -(-1)^{deg(\overline{a})deg(\overline{b})}(r(\overline{b},\overline{a}),[\overline{b},\overline{a}]) \\
 & = -(-1)^{deg(\overline{a})deg(\overline{b})}[(x_{2},\overline{b}),(x_{1},\overline{a})] \\
 &= -(-1)^{deg((x_{1},\overline{a}))deg((x_{2},\overline{b}))}[(x_{2},\overline{b}),(x_{1},\overline{a})].\\
\end{split}
\end{equation}
\item To check the graded Jacobi identity, consider
\begin{equation} 
\begin{split}
[[(x_{1},\overline{a}),(x_{2},\overline{b})],(x_{3},\overline{c})] & = [(r(\overline{a},\overline{b}),[\overline{a},\overline{b}]),(x_{3},\overline{c})] \\
 & = (r([\overline{a},\overline{b}],\overline{c}),[[\overline{a},\overline{b}],\overline{c}] ) \\
   & = \bigg( \big(r(\overline{a},[\overline{b},\overline{c}])-(-1)^{deg(\overline{a})deg(\overline{b})}r(\overline{b},[\overline{a},\overline{c}]) \big), \big( [\overline{a},[\overline{b},\overline{c}]]-(-1)^{deg(\overline{a})deg(\overline{b})}[\overline{b},[\overline{a},\overline{c}]] \big)   \bigg) \\
    & = \bigg( \big(r(\overline{a},[\overline{b},\overline{c}]),[\overline{a},[\overline{b},\overline{c}]] \big) -(-1)^{deg(\overline{a})deg(\overline{b})} \big(r(\overline{b},[\overline{a},\overline{c}]),[\overline{b},[\overline{a},\overline{c}]] \big) \bigg) \\
    &=[(x_{1},\overline{a}),\big( r(\overline{b},\overline{c}),[\overline{b},\overline{c}]\big)]-(-1)^{deg(\overline{a})deg(\overline{b})}[(x_{2},\overline{b}),\big( r(\overline{a},\overline{c}),[\overline{a},\overline{c}]\big)] \\
    &= [(x_{1},\overline{a}),[(x_{2},\overline{b}),(x_{3},\overline{c})]]-(-1)^{deg(\overline{a})deg(\overline{b})}[(x_{2},\overline{b}),[(x_{1},\overline{a}),(x_{3},\overline{c})]].\\
\end{split}
\end{equation}

Hence, $R$ is a Lie superalgebra with the given multiplication. Also proof of part(ii) is obvious.
\end{enumerate}
\end{proof}
Below is the Lemma which states that factor sets exist for any Lie superalgebras.
\begin{lemma}\label{Lem6}
There exists a factor set $r$, for any Lie superalgebra $L$ such that $L\cong (Z(L),L/Z(L),r).$
\end{lemma}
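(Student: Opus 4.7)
The plan is to build $r$ as the obstruction to a graded linear splitting of the projection $\pi: L \to L/Z(L)$ being a Lie superalgebra homomorphism. Since $Z(L)$ is a graded ideal, I would fix a homogeneous (even) linear section $s: L/Z(L) \to L$ with $\pi \circ s = \mathrm{id}$; equivalently, choose a graded complement $V$ of $Z(L)$ in $L$ and identify $L/Z(L)$ with $V$ via $s$. Then define
$$r(\bar a,\bar b) := [s(\bar a), s(\bar b)] - s([\bar a,\bar b]) \quad \text{for } \bar a,\bar b \in L/Z(L).$$
Applying $\pi$ to the right-hand side gives $[\bar a,\bar b] - [\bar a,\bar b] = 0$, so $r$ lands in $Z(L)$; evenness of $s$ ensures $r$ respects the $\mathbb{Z}_2$-grading, giving condition $(1)$ of the factor set definition.

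The next step is to verify conditions $(2)$ and $(3)$ by direct computation. Graded skew-symmetry of $r$ follows immediately from graded skew-symmetry of the brackets on $L$ and on $L/Z(L)$. For the derivation-type identity, I would expand $r([\bar a,\bar b],\bar c)$ using the definition, substitute $s([\bar a,\bar b]) = [s(\bar a),s(\bar b)] - r(\bar a,\bar b)$, and use centrality of $r(\bar a,\bar b)$ to discard $[r(\bar a,\bar b),s(\bar c)]$; the graded Jacobi identity in $L$ then rewrites $[[s(\bar a),s(\bar b)],s(\bar c)]$ as the desired combination of nested brackets, while the graded Jacobi identity in $L/Z(L)$ absorbs the $s$-of-nested-bracket terms after an analogous expansion of the right-hand side of $(3)$. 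Tracking the signs and checking that the central correction terms $[s(\bar a),r(\bar b,\bar c)]$ and $[s(\bar b),r(\bar a,\bar c)]$ cancel out correctly in the super setting is where I expect the main obstacle to lie.

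Once $r$ is a factor set, Lemma \ref{Lem5} turns $(Z(L), L/Z(L), r)$ into a Lie superalgebra. The isomorphism is the natural map coming from the graded direct sum decomposition $L = Z(L) \oplus s(L/Z(L))$: define
$$\phi: L \longrightarrow (Z(L), L/Z(L), r), \qquad \phi(x) := \bigl(x - s(\pi(x)),\, \pi(x)\bigr),$$
a graded linear bijection with explicit inverse $(z,\bar a) \mapsto z + s(\bar a)$. To check multiplicativity, write $x = z_1 + s(\bar x)$ and $y = z_2 + s(\bar y)$ with $z_1,z_2 \in Z(L)$; then $[x,y] = [s(\bar x), s(\bar y)] = r(\bar x,\bar y) + s([\bar x,\bar y])$ by the very definition of $r$, so
$$\phi([x,y]) = \bigl(r(\bar x,\bar y), [\bar x,\bar y]\bigr) = [(z_1,\bar x),(z_2,\bar y)] = [\phi(x),\phi(y)].$$
This is essentially a graded adaptation of the classical cocycle construction for central extensions; the only genuinely new content is the Jacobi verification in the super setting, and once that is in hand the isomorphism $\phi$ falls out mechanically from the Lemma \ref{Lem5} construction.
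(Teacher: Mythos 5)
Your proposal is correct and follows essentially the same route as the paper: the paper also chooses a graded complement of $Z(L)$ (equivalently your even section, called $T$ there), defines $r(\overline{a},\overline{b})=[T(\overline{a}),T(\overline{b})]-T[\overline{a},\overline{b}]$, and verifies the factor-set axioms before exhibiting the isomorphism. Your map $\phi$ is just the inverse of the paper's isomorphism $\theta(x,\overline{a})=x+T(\overline{a})$, so the two arguments coincide in substance.
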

\begin{proof}
Let $K$ be the vector superspace and let it be the complement of $Z(L)$ in $L$, i.e., $L=K \oplus Z(L)$. Let us define $T:L/Z(L)\rightarrow L$ by $T(\overline{a})=T(a+Z(L))=T(k+l+Z(L))=k$, for all $\overline{a} \in L/Z(L)$ with $a=k+l$ where $k \in K$ and $l \in Z(L)$. Here, $T$ is a well-defined map and is also homogeneous linear map of even degree. Clearly we have $\overline{T(\overline{a})}=\overline{a}$. Now
for $\overline{a}, \overline{b} \in L/Z(L)$, consider $[\overline{a},\overline{b}]=[k+l+Z(L),k'+l'+Z(L)]=[k,k']+Z(L)$, then
\begin{align*} 
 [T(\overline{a}),T(\overline{b})]-T[\overline{a},\overline{b}] +Z(L)&= [k,k']-T(\overline{[k,k']})+Z(L) \\
 &=[k, k']- \overline{T(\overline{[k, k']})}= 0+Z(L)
\end{align*}
implies $[T(\overline{a}),T(\overline{b})]-T[\overline{a},\overline{b}]  \in Z(L)$.
Define
\[r:L/Z(L)\times L/Z(L) \longrightarrow Z(L)\] by, $~~~~~~~~~~~~~~~~~~~~~~~~~~~~~r(\overline{a},\overline{b})=[T(\overline{a}),T(\overline{b})]-T[\overline{a},\overline{b}]$. We show that $r$ is a factor set.

Since $T$ is a homogeneous linea map of even degree and bracket respects gradation, so $r(\overline{a}, \overline{b}) \in (L/Z(L))_{\alpha+\beta}$ povided $\overline{a} \in L/Z(L))_{\alpha}$ and $\overline{b} \in L/Z(L))_{\beta}$. To check skew-symmetric property of $r$, take 
\begin{equation*}
\begin{split}
r(\overline{a}, \overline{b}) &= [T(\overline{a}), T(\overline{b})-T([\overline{a}, \overline{b}])\\
&= - (-1)^{deg(\overline{a}) deg(\overline{b}) }([T(\overline{b}), T(\overline{a})-T([\overline{b}, \overline{a}])\\
&= -(-1)^{deg(\overline{a}) deg(\overline{b})} r(\overline{b}, \overline{a})
\end{split}
\end{equation*}
for all $\overline{a}, \overline{b} \in L/Z(L)$.
 To check Jacobi identity, take
 \begin{equation*} 
\begin{split}
r([\overline{a},\overline{b}],\overline{c}) &= [T([\overline{a},\overline{b}]),T(\overline{c})]-T[[\overline{a},\overline{b}],\overline{c}]\\
&= [[T(\overline{a}),T(\overline{b})]+Z(L),T(\overline{c})+Z(L)]-T[[\overline{a},\overline{b}],\overline{c}]\\
&= [[T(\overline{a}),T(\overline{b})],T(\overline{c})]+Z(L)-T[[\overline{a},\overline{b}],\overline{c}]\\
&= \big([T(\overline{a}),[T(\overline{b}),T(\overline{c})]]-(-1)^{deg(\overline{a})deg(\overline{b})}[T(\overline{b}),[T(\overline{a}),T(\overline{c})]]\big)+Z(L)-T[\overline{a},[\overline{b},\overline{c}]]\\
&~~~~~~~~~~~~~~~~~~~~~~~~~~~~~~~~~~~~~~~~~+(-1)^{deg(\overline{a})deg(\overline{b})}T[\overline{b},[\overline{a},\overline{c}]]\\
&= \big([T(\overline{a}),[T(\overline{b}),T(\overline{c})]]+Z(L)-T[\overline{a},[\overline{b},\overline{c}]]\big)-(-1)^{deg(\overline{a})deg(\overline{b})} \big([T(\overline{b}),[T(\overline{a}),T(\overline{c})]]+Z(L)-T[\overline{b},[\overline{a},\overline{c}]]\big)\\
&=r(\overline{a},[\overline{b},\overline{c}])-(-1)^{deg(\overline{a})deg(\overline{b})}r(\overline{b},[\overline{a},\overline{c}])
\end{split}
\end{equation*}
for all $\overline{a}, \overline{b}, \overline{c} \in L/Z(L)$.
Consider, $\theta:(Z(L),L/Z(L),r)\rightarrow L$ is given by, $\theta(x,\overline{a})= x+T(\overline{a})$, for  $x \in  Z(L)$ and $\overline{a}=a+Z(L)=k+Z(L)$ where $k \in K$. Suppose,  
$(x,\overline{a})=(y,\overline{b})\Leftrightarrow x+ k = y+ k^{\prime}\Leftrightarrow x-y =  k-k^{\prime} \in Z(L)\cap K={0}\Leftrightarrow \theta(x,\overline{a})=\theta(y,\overline{b})$. Hence, $\theta$ is well-defined map and it is injective. Evidently $\theta$ is surjective. Also $\theta$ is a homogeneous linear map of even degree and as
 \begin{equation} 
\begin{split}
\theta([(x,\overline{a}),(y,\overline{b})]) &=\theta(r(\overline{a},\overline{b}),[\overline{a},\overline{b}])\\
&= r(\overline{a},\overline{b})+T([\overline{a},\overline{b}])
= [T(\overline{a}),T(\overline{b})]  \\
&= [x+T(\overline{a}),y+T(\overline{b})]
=[\theta(x,\overline{a}),\theta(y,\overline{b})],
\end{split}
\end{equation}
so it is a Lie superalgebra homomorphism, hence $\theta$ is the required isomorphism.
\end{proof}
Following is the Lemma which gives a connection between two stem Lie superalgebras.
 \begin{lemma}\label{Lem7}
Let $L$ be a stem Lie superalgebra in an isoclinism family of Lie superalgebras $\mathcal{C}$. Then for any stem Lie superalgebra $M$ of $\mathcal{C}$, there exists a factor set $r$ over $L$ such that $M\cong (Z(L),L/Z(L),r)$. 
\end{lemma}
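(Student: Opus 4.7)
The plan is to combine the structural isomorphism from Lemma \ref{Lem6} with the isoclinism $(\varphi,\theta)$ between $L$ and $M$. First I would apply Lemma \ref{Lem6} to $M$ to obtain a factor set $r'$ on $M$ together with an isomorphism $M \cong (Z(M), M/Z(M), r')$. The idea is then to pull $r'$ back along $\varphi$ and $\theta$ to produce a factor set $r$ on $L$ and a Lie superalgebra isomorphism $(Z(L), L/Z(L), r) \cong (Z(M), M/Z(M), r')$; composing the two yields $M \cong (Z(L), L/Z(L), r)$.

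The crucial preliminary, and the only place where the stem hypothesis enters, is that $\theta$ restricts to an isomorphism $\theta|_{Z(L)} : Z(L) \to Z(M)$. Since $L$ is stem, $Z(L) \subseteq L'$, so $\theta$ is defined on $Z(L)$; Lemma \ref{Lem4}(1) applied to $x \in Z(L)$ gives $\theta(x) + Z(M) = \varphi(x + Z(L)) = 0$, hence $\theta(Z(L)) \subseteq Z(M)$. For surjectivity, any $y \in Z(M)$ lies in $M'$ by the stem property of $M$, so $y = \theta(x)$ for some $x \in L'$; then $\varphi(x + Z(L)) = \theta(x) + Z(M) = 0$, and injectivity of $\varphi$ forces $x \in Z(L)$.

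With this in hand, define
\[
r(\bar{a}, \bar{b}) := \theta^{-1}\bigl(r'(\varphi(\bar{a}), \varphi(\bar{b}))\bigr) \in Z(L).
\]
The three factor-set axioms (grading, graded skew-symmetry, and the graded Jacobi-type identity) transfer verbatim from $r'$, because $\varphi$ and $\theta$ are even Lie superalgebra isomorphisms and $\varphi$ preserves the bracket on the quotient. Finally the map
\[
\Psi : (Z(L), L/Z(L), r) \longrightarrow (Z(M), M/Z(M), r'), \qquad \Psi(x, \bar{a}) = (\theta(x), \varphi(\bar{a}))
\]
is an even linear bijection, and its bracket preservation reduces to the definition of $r$ together with the identity $\varphi([\bar{a}, \bar{b}]) = [\varphi(\bar{a}), \varphi(\bar{b})]$ from the isoclinism diagram. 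Composing $\Psi$ with the isomorphism furnished by Lemma \ref{Lem6} then gives $M \cong (Z(L), L/Z(L), r)$.

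The expected obstacle is precisely the equality $\theta(Z(L)) = Z(M)$. Without the stem hypothesis on both algebras this can fail outright: by Lemma \ref{Lem1} isoclinism only determines the centres up to an abelian direct summand, so the total dimension of $(Z(L), L/Z(L), r)$ would no longer match that of $M$, and no choice of factor set could repair the discrepancy. Once that identification is in place, the rest is routine transport of structure along $\varphi$ and $\theta$.
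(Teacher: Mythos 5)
Your proposal is correct and follows essentially the same route as the paper: obtain a factor set on $M$ via Lemma \ref{Lem6}, transport it to $L$ by $r(\bar a,\bar b)=\theta^{-1}\bigl(s(\varphi(\bar a),\varphi(\bar b))\bigr)$, and check that $(x,\bar a)\mapsto(\theta(x),\varphi(\bar a))$ is an isomorphism. The only difference is that you spell out, via Lemma \ref{Lem4} and the stem hypothesis on both algebras, why $\theta$ restricts to an isomorphism $Z(L)\to Z(M)$, a step the paper compresses into a single citation of Lemma \ref{Lem4}; your elaboration is accurate and welcome.
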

\begin{proof}
Let $L\sim M$ and let the pair $(\varphi,\theta)$ be isoclinism of Lie superalgebras $L$ and $M$. We have $L' \subseteq Z(L)$ and $M' \subseteq Z(M)$. Then, by Lemma \ref{Lem4}, $\theta(Z(L))=Z(M)$. According to Lemma \ref{Lem6}, there exists a factor set $s$ such that \[M\cong (Z(M),M/Z(M),s).\] Now, define the factor set 
\[ r:L/Z(L)\times L/Z(L) \longrightarrow Z(L)\]
by, \[~~~~~~~~r(\overline{a},\overline{b})=\theta^{-1}(s(\varphi(\overline{a}),\varphi(\overline{b}))),\] where $\overline{a},~\overline{b} \in L/Z(L)$. 
Further define 
\[\psi:(Z(L),L/Z(L),r)\longrightarrow (Z(M),M/Z(M),s)\]
by, \[~~~~~~~~~~\psi(x,\overline{a})=(\theta(x),\varphi(\overline{a})).\]
Since, $\theta$  and $\varphi$ are isomorphisms, so $\psi$ is a bijection and is also homogeneous linear map of even degree. Finally, consider  

\begin{equation} 
\begin{split}
\psi([(x,\overline{a}),(y,\overline{b})]) &=\psi(r(\overline{a},\overline{b}),[\overline{a},\overline{b}])\\
&= \big(\theta(r(\overline{a},\overline{b})),\varphi([\overline{a},\overline{b}])\big)
=  \big(s(\varphi(\overline{a}),\varphi(\overline{b})),\varphi([\overline{a},\overline{b}])\big)\\
&= [(\theta(x),\varphi(\overline{a})),(\theta(y),\varphi(\overline{b}))]
=[\psi(x,\overline{a}),\psi(y,\overline{b})],
\end{split}
\end{equation}
implies $\psi$ is a Lie superalgebra homomorphism.
Hence, $\psi$ is an isomorphism, i.e., \[ (Z(L),L/Z(L),r)\cong (Z(M),M/Z(M),s).\] It follows that, \[M\cong (Z(L),L/Z(L),r).\] 
\end{proof}

\begin{lemma}\label{Lem8}
Let $L$ be a Lie superalgebra, $r$ and $s$ be two factor sets over $L$. Assume that
\[R=(Z(L),L/Z(L),r),~~~~~~~~Z_{R}=\{(x,0) \in R:~x \in Z(L)\} \cong Z(L),\]
\[S=(Z(L),L/Z(L),s),~~~~~~~~Z_{S}=\{(x,0) \in R:~x \in Z(L)\} \cong Z(L).\]
Let $\lambda$ is an isomorphism from $R$ to $S$ satisfying $\lambda(Z_{R})=Z_{S}$, then the restriction of $\lambda$ on $L/Z(L)$ and $Z(L)$ define the automorphisms $\mu \in Aut(L/Z(L))$ and $\nu \in Aut(Z(L))$, respectively.
 \end{lemma}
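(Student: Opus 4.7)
The plan is to use the canonical decomposition of $R$ and $S$ as vector superspaces, $R = Z_R \oplus (\{0\} \times L/Z(L))$ and similarly for $S$, together with the hypothesis $\lambda(Z_R) = Z_S$, to split $\lambda$ into its actions on the two pieces and thereby produce $\nu$ and $\mu$ separately.

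First I would define $\nu$ as the transported restriction of $\lambda$ to $Z_R$. Since $\lambda(Z_R) = Z_S$ and the identifications $Z_R \cong Z(L) \cong Z_S$ are the canonical ones $x \leftrightarrow (x,0)$, the restriction $\lambda|_{Z_R}$ yields a bijective even linear map $\nu \colon Z(L) \to Z(L)$; because $Z(L)$ is abelian, the bracket is trivially preserved, so $\nu \in Aut(Z(L))$. To define $\mu$, I would observe that $\lambda(Z_R) = Z_S$ implies $\lambda$ descends to an isomorphism $\bar\lambda \colon R/Z_R \to S/Z_S$; under the canonical identifications $R/Z_R \cong L/Z(L) \cong S/Z_S$ given by $(x,\bar a)+Z_R \mapsto \bar a$, this gives a bijective even linear map $\mu \colon L/Z(L) \to L/Z(L)$, characterised by the rule that $\mu(\bar a) = \bar b$ whenever the second coordinate of $\lambda(x,\bar a)$ equals $\bar b$ for some (equivalently, any) $x \in Z(L)$.

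To finish I would verify bracket compatibility for $\mu$. Using the multiplication rule of Lemma~\ref{Lem5}, $\lambda([(x_1,\bar a),(x_2,\bar b)]) = \lambda(r(\bar a,\bar b),[\bar a,\bar b])$ has second coordinate $\mu([\bar a,\bar b])$; on the other hand, $[\lambda(x_1,\bar a),\lambda(x_2,\bar b)]$ computed in $S$ has second coordinate $[\mu(\bar a),\mu(\bar b)]$, and equating the two via the fact that $\lambda$ is a Lie superalgebra homomorphism gives the required identity.

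The only point where genuine care is needed is the well-definedness of $\mu$: the second coordinate of $\lambda(x,\bar a)$ must be independent of $x \in Z(L)$, and this is precisely the content of the hypothesis $\lambda(Z_R) \subseteq Z_S$, since altering $x$ modifies the image of $\lambda$ only within $Z_S$, which has trivial second coordinate. All remaining verifications, namely preservation of $\mathbb{Z}_{2}$-grading and surjectivity and injectivity of $\mu$ and $\nu$, are straightforward consequences of the analogous properties of $\lambda$ (with injectivity of $\mu$ coming from $\lambda^{-1}(Z_S) = Z_R$).
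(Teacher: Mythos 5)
Your proposal is correct and follows essentially the same route as the paper: the paper likewise obtains $\mu$ from the induced isomorphism $\overline{\lambda}\colon R/Z_{R}\to S/Z_{S}$ composed with the canonical identifications $\overline{a}\mapsto(0,\overline{a})+Z_{R}$ (your characterisation via the second coordinate of $\lambda(x,\overline{a})$ is exactly the paper's condition $\lambda(0,\overline{a})+Z_{S}=(0,\mu(\overline{a}))+Z_{S}$), and obtains $\nu$ from the restriction $\lambda(x,0)=(\nu(x),0)$, with the same well-definedness argument using that elements of $Z_{S}$ have vanishing second coordinate. No gaps; the bracket-compatibility check for $\mu$ and the triviality of the bracket on $Z(L)$ for $\nu$ match the paper's verification.
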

\begin{proof}
By assumption, $\lambda(Z_{R})=Z_{S}$, then define the map $\overline{\lambda}:R/Z_{R}\longrightarrow S/Z_{s}$ given by, $\overline{\lambda}((x,\overline{a})+Z_{R})=\lambda(x,\overline{a})+Z_{S}$. Here $\overline{\lambda}$ is an isomorphism. Define 
$\mu$ such that the following diagram is commutative;
\begin{center}
  \begin{tikzpicture}[>=latex]
\node (x) at (0,0) {\(L/Z(L) \)};
\node (z) at (0,-3) {\(R/Z_{R}\)};
\node (y) at (3,0) {\(L/Z(L)\)};
\node (w) at (3,-3) {\(S/Z_{S}\)};
\draw[->] (x) -- (y) node[midway,above] {$\mu$};
\draw[->] (x) -- (z) node[midway,left] {$\phi$};
\draw[->] (z) -- (w) node[midway,below] {$\overline{\lambda}$};
\draw[->] (y) -- (w) node[midway,right] {$\psi$};
\end{tikzpicture}\\
\end{center}
 where $\phi$ is defined as $\phi( \overline{a})= (0, \overline{a})+Z_{R}$ and $\psi(\overline{a})=(0, \overline{a})+Z_{S}$. In other words $\lambda(0,\overline{a})+Z_{S}=(0,\mu(\overline{a}))+Z_{S}$, for all $\overline{a} \in L/Z(L)$. Certainly $\phi, \psi$ are both homogeneous linear maps of even degree. Moreover for all $\overline{a},\overline{b} \in L/Z(L)$, we have
 \begin{align*} 
 \phi[\overline{a}, \overline{b}]&=(0,[\overline{a}, \overline{b}])+Z_{R}\\
 &= (r(\overline{a}, \overline{b}), [\overline{a}, \overline{b}])
 =[(0,\overline{a}), (0, \overline{b})]\\
 &= [(0, \overline{a})+Z_{R}, (0, \overline{b})+Z_{R}]
 =[\phi(\overline{a}), \phi(\overline{b})].
 \end{align*} So, $\phi$ is a homomorphism and similarly $\psi$ is also a homomorphism. Since, $\lambda$ is an isomorphism, $\mu$ is a homogeneous linear map of even degree. Now, to check the map $\mu$ is well-defined and injective, let $\overline{a} = \overline{b}$ implies $(0,\overline{a})= (0,\overline{b})$ in $L/Z(L)$, so in $R/Z_{R}$ we have $(0,\overline{a})+Z_{R} = (0,\overline{b})+Z_{R}$, 
\begin{align*} 
\Leftrightarrow \overline{\lambda}((0,\overline{a})+Z_{R}) &= \overline{\lambda}((0,\overline{b})+Z_{R})\\
\Leftrightarrow \lambda(0,\overline{a})+Z_{S} &= \lambda(0,\overline{b})+Z_{S}\\
\Leftrightarrow (0,\mu(\overline{a}))+Z_{S} &= (0,\mu(\overline{b}))+Z_{S}\\
\Leftrightarrow (0,\mu(\overline{a})-\mu(\overline{b})) &\in Z_{S}.
\end{align*}
But any element in $Z_{S}$ is of the form $(x, 0)$, for $x \in Z(L)$, which implies $\Leftrightarrow \mu(\overline{a})-\mu(\overline{b}))=0 ~in ~L/Z(L)
\Leftrightarrow \mu(\overline{a})=\mu(\overline{b})) ~in ~L/Z(L)$.

For each $\overline{b} \in L/Z(L)$, there exist $\overline{a} \in L/Z(L)$ such that $\overline{\lambda}((0,\overline{a})+Z_{R})=(0,\overline{b})+Z_{S}$ and on the other hand, 
$\overline{\lambda}((0,\overline{a})+Z_{R})=\lambda(0,\overline{a})+Z_{S}=(0,\mu(\overline{a}))+Z_{S}$. Hence, $\mu(\overline{a})=\overline{b}$, i.e., $\mu$ is surjective. Now, for
\begin{equation} 
\begin{split}
(0,\mu([\overline{a}, \overline{b}]))+Z_{S}&=\lambda(0,[\overline{a},\overline{b}])+Z_{S}\\
&=\overline{\lambda}(0,[\overline{a},\overline{b}]+Z_{R})
=\overline{\lambda}\phi([\overline{a},\overline{b}])\\
&=[\overline{\lambda}\phi(\overline{a}),\overline{\lambda}\phi(\overline{b})]
=[\psi\mu(\overline{a}),\psi\mu(\overline{b})]\\
&=\psi[\mu(\overline{a}),\mu(\overline{b})]
=(0,[\mu(\overline{a}),\mu(\overline{b})]+Z_{S},
\end{split}
\end{equation}
i.e, $\mu([\overline{a},\overline{b}]=[\mu(\overline{a}),\mu(\overline{b})]$ and thus $\mu$ is an automorphism. Consider the map $\tilde{\lambda}: Z_{R} \longrightarrow Z_{S}$ is defined as $\tilde{\lambda}(x,0)=\lambda(x,0)$ for all $x \in Z(L)$, is an isomorphism. Define $\nu$ in such a way that the following diagram is commutative;
\begin{center}
\begin{tikzpicture}[>=latex]
\node (x) at (0,0) {\( Z(L) \)};
\node (z) at (0,-3) {\(Z_{R}\)};
\node (y) at (3,0) {\(Z(L)\)};
\node (w) at (3,-3) {\(Z_{S}\)};
\draw[->] (x) -- (y) node[midway,above] {$\nu$};
\draw[->] (x) -- (z) node[midway,left] {$\phi$};
\draw[->] (z) -- (w) node[midway,below] {$\tilde{\lambda}$};
\draw[->] (y) -- (w) node[midway,right] {$\psi$};
\end{tikzpicture}\\
\end{center}
 
i.e. $\lambda(x,0)=(\nu(x),0)$, for all $x \in Z(L)$. It is easy to check that $\nu$ is an automorphism.
\end{proof}
\begin{lemma}\label{lem9}
Let $L$ be a Lie superalgebra and $R, S, Z_{R}$ and $Z_{S}$ be as in the previous lemma.
\begin{enumerate}
\item Consider $\lambda : R \longrightarrow S$ is a Lie superalgebra isomorphism such that $\lambda(Z_{R})= Z_{S}$. Let $\mu \in Aut(L/Z(L))$ and $\nu \in Aut(Z(L))$ be the automorphisms induced by $\lambda$. Then there exists a homogeneous linear map of even degree, $\gamma: L/Z(L) \longrightarrow Z(L)$ such that,\[\nu(r(\overline{a}, \overline{b})+ \gamma[\overline{a}, \overline{b}])=s(\mu(\overline{a}), \mu(\overline{b})).\]
\item If $\mu \in Aut(L/Z(L))$, $\nu \in Aut(Z(L))$ and $\delta: L/Z(L) \longrightarrow Z(L)$ is a homogeneous linear map of even degree such that $$\nu(r(\overline{a}, \overline{b})+ \delta[\overline{a}, \overline{b}])=s(\mu(\overline{a}), \mu(\overline{b}))$$ holds, then there exists an isomorphism $\lambda:R \longrightarrow S$ which is induced by $\mu$ and $\nu$ satisfying $\lambda(Z_{R})=Z_{S}$.
\end{enumerate}
\end{lemma}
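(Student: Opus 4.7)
The plan is to handle both directions by writing the isomorphism $\lambda$ in coordinates, in terms of the automorphisms $\mu$ on $L/Z(L)$ and $\nu$ on $Z(L)$ supplied by Lemma \ref{Lem8}.

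For part (1), Lemma \ref{Lem8} already gives $\lambda(x,0)=(\nu(x),0)$ and $\lambda(0,\overline{a})\equiv(0,\mu(\overline{a}))\pmod{Z_{S}}$. Since every element of $Z_{S}$ has the form $(z,0)$ with $z\in Z(L)$, the first coordinate of $\lambda(0,\overline{a})$ lies in $Z(L)$, so I can write $\lambda(0,\overline{a})=(\nu(\gamma(\overline{a})),\mu(\overline{a}))$ for a uniquely determined element $\gamma(\overline{a})\in Z(L)$; I insert $\nu$ here deliberately so that the final identity will match the statement. Linearity and the even-degree property of $\gamma$ follow from the corresponding properties of $\lambda,\mu,\nu$ together with the injectivity of $\nu$. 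By additivity, $\lambda(x,\overline{a})=(\nu(x+\gamma(\overline{a})),\mu(\overline{a}))$. Plugging this into $\lambda([(x_{1},\overline{a}),(x_{2},\overline{b})])=[\lambda(x_{1},\overline{a}),\lambda(x_{2},\overline{b})]$, expanding the left side with the multiplication on $R$ and the right side with the multiplication on $S$, and matching first coordinates, gives exactly $\nu(r(\overline{a},\overline{b})+\gamma[\overline{a},\overline{b}])=s(\mu(\overline{a}),\mu(\overline{b}))$.

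For part (2), the natural candidate is the map $\lambda\colon R\to S$ defined by $\lambda(x,\overline{a}):=(\nu(x+\delta(\overline{a})),\mu(\overline{a}))$. It is manifestly a homogeneous linear map of even degree, and is bijective because $\mu$ and $\nu$ are. The bracket-preserving property is checked by the same short computation as in part (1): the left-hand side $\lambda([(x_{1},\overline{a}),(x_{2},\overline{b})])$ collapses to $(\nu(r(\overline{a},\overline{b})+\delta[\overline{a},\overline{b}]),\mu[\overline{a},\overline{b}])$, and the right-hand side to $(s(\mu(\overline{a}),\mu(\overline{b})),[\mu(\overline{a}),\mu(\overline{b})])$, so the hypothesis is exactly what is needed to equate them. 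Finally $\delta(0)=0$ yields $\lambda(x,0)=(\nu(x),0)$, whence $\lambda(Z_{R})\subseteq Z_{S}$, and equality follows from surjectivity of $\nu$.

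The only subtle point, and really the main obstacle, is the placement of $\nu$ in the parametrisation of $\lambda(0,\overline{a})$ in part (1). Writing $\lambda(0,\overline{a})=(\gamma(\overline{a}),\mu(\overline{a}))$ without the $\nu$ would produce the twisted identity $\nu(r(\overline{a},\overline{b}))+\gamma[\overline{a},\overline{b}]=s(\mu(\overline{a}),\mu(\overline{b}))$; absorbing $\nu$ into the definition of $\gamma$ gives the clean form stated in the lemma, and also ensures that the construction in part (2) is precisely the inverse procedure of the extraction in part (1).
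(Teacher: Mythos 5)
Your proof is correct and takes essentially the same route as the paper: write $\lambda$ in coordinates through $\mu$, $\nu$, and a correction map $L/Z(L)\to Z(L)$, then equate the two computations of $\lambda[(x,\overline{a}),(y,\overline{b})]$ coordinatewise. Your deliberate placement of $\nu$ (defining $\gamma$ via $\lambda(0,\overline{a})=(\nu(\gamma(\overline{a})),\mu(\overline{a}))$ and setting $\lambda(x,\overline{a})=(\nu(x+\delta(\overline{a})),\mu(\overline{a}))$ in part (2)) is a harmless reparametrisation (replace $\gamma$ by $\nu\circ\gamma$), and it in fact makes the computation agree literally with the identity $\nu(r(\overline{a},\overline{b})+\gamma[\overline{a},\overline{b}])=s(\mu(\overline{a}),\mu(\overline{b}))$ as stated, whereas the paper extracts $\gamma$ without $\nu$ and uses $\lambda(x,\overline{a})=(\nu(x)+\delta(\overline{a}),\mu(\overline{a}))$, which strictly produces the equivalent twisted form $\nu(r(\overline{a},\overline{b}))+\gamma[\overline{a},\overline{b}]=s(\mu(\overline{a}),\mu(\overline{b}))$.
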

\begin{proof}
We have,  $\lambda(0, \overline{a})+Z_{S}=(0, \mu(\overline{a}))+Z_{S}$ which means $\lambda(0, \overline{a})-(0, \mu(\overline{a})) \in Z_{S}$. Say,  $\lambda(0, \overline{a})-(0, \mu(\overline{a})=(x_{\overline{a}}, 0)$, for some $x_{\overline{a}} \in Z(L)$. Define the map $\gamma: L/Z(L) \longrightarrow Z(L)$ by $\gamma(\overline{a})=x_{\overline{a}}$, for all $\overline{a} \in L/Z(L)$. It is easy to see that the map $\gamma$ is well defined. For $\overline{a}, \overline{b} \in \frac{L_{\tau}+Z(L)}{Z(L)}$ with $\tau \in \mathbb{Z}_{2}$,  we have 
\begin{align*}
(\gamma(\overline{a}+\overline{b}), 0) &=(x_{\overline{a+b}},0)\\
&=\lambda(0, \overline{a+b})- (0, \mu(\overline{a+b}))\\
&=\lambda(0, \overline{a})-(0, \mu(\overline{a}))+\lambda(0, \overline{b})-(0, \mu(\overline{b}))\\
&=(x_{\overline{a}}, 0)+(y_{\overline{b}},0)
= (\gamma(\overline{a})+\gamma(\overline{b}), 0).
\end{align*}
Hence, we get $\gamma(\overline{a}+\overline{b})=\gamma(\overline{a})+\gamma(\overline{b})$ implies $\gamma$ is a linear map. Further as  $\lambda$ and $\mu$ are Lie superalgebra isomorphisms, that immediately implies $\gamma$ is a homogeneous linear map of even degree. We have for $x \in Z(L)$, $\lambda(x, 0)=(\nu(x),0)$. 
\begin{align*}
\lambda(x, \overline{a}) &=(\nu(x), 0)+(0, \mu(\overline{a}))+(\gamma(\overline{a}),0)\\
&= (\nu(x)+\gamma(\overline{a}), \mu(\overline{a})).
\end{align*}
On one hand, \begin{align*}
\lambda[(x, \overline{a}), (y, \overline{b})]&=[\lambda(0, \overline{a}), \lambda(0, \overline{b})]\\
&= [(0, \mu(\overline{a}))+(x_{\overline{a}}, 0), (0, \mu(\overline{b}))+(y_{\overline{b}}, 0)]\\
&=[(\gamma(\overline{a}), \mu(\overline{a})), (\gamma(\overline{b}), \mu(\overline{b}))]
=\left( s(\mu(\overline{a}),\mu(\overline{b})), [\mu(\overline{a}), \mu(\overline{b})]\right).
\end{align*}
On the other hand, $\lambda[(x, \overline{a}), (y, \overline{b})]= \lambda \left( r(\overline{a}, \overline{b}), [\overline{a}, \overline{b}] \right)= \left( \nu(r(\overline{a}, \overline{b}))+ \gamma[\overline{a}, \overline{b}], [\mu(\overline{a}), \mu(\overline{b})]\right)$. Hence, 
$$\nu(r(\overline{a}, \overline{b})+ \gamma[\overline{a}, \overline{b}])=s(\mu(\overline{a}), \mu(\overline{b})),$$ which proves our first assertion.
\par
Let us define $\lambda$ by \begin{equation*}
\begin{split}
\lambda: R &\longrightarrow S \\
\lambda(x, \overline{a}) &= ( \nu(x)+ \delta(\overline{a}), \mu(\overline{a})),
\end{split}
\end{equation*}
 $\forall ~~ \overline{a} \in \frac{L_{\tau}+Z(L)}{Z(L)}$ and $\forall ~~x \in L_{\tau} \cap Z(L)$ with $\tau \in \mathbb{Z}_{2}$. Here $\lambda$ is a well defined map and also a bijection. Further as $\nu, \delta, \mu$ are all homogeneous linear maps of even degree hence so is $\lambda$. Now, 
\begin{align*}
\lambda[(x, \overline{a}), (x, \overline{b})] &= \lambda \left(r( \overline{a}, \overline{b}), [\overline{a}, \overline{b}]\right)\\
&= \left( \nu(r(\overline{a}, \overline{b})+[\overline{a}, \overline{b}]+ \delta[\overline{a}, \overline{b}], \mu[\overline{a}, \overline{b}] \right)\\
&= \left( s(\mu(\overline{a}), \mu(\overline{b})), \mu[\overline{a}, \overline{b}]\right).
\end{align*}
Also \begin{align*}
[\lambda(x, \overline{a}), \lambda(y, \overline{b})] 
&=[(\nu(x)+\delta(\overline{a}), \mu(\overline{a})), (\nu(y)+\delta(\overline{b}), \mu(\overline{b}))]\\
&= \left( s(\mu(\overline{a}), \mu(\overline{b})), [\mu(\overline{a}), \mu(\overline{b})] \right).
\end{align*}
Finally, we get $\lambda[(x, \overline{a}), (y, \overline{b})] =[\lambda(x, \overline{a}), \lambda(y, \overline{b})]$, so $\lambda$ is a Lie superalgebra isomorphism. Evidently $\lambda(x, 0)=(\nu(x), 0)$ and $\lambda(0, \overline{a})= \left( \delta(\overline{a}), \mu(\overline{a})\right)$. Now, 
\begin{align*}
\lambda(0, \overline{a})+Z_{S} &= \left( \nu(0)+\delta(\overline{a}), \mu(\overline{a})\right) + Z_{S}\\
&=(0, \mu(\overline{a}))+(\delta(\overline{a}), 0)+Z_{S}\\
&= (0, \mu(\overline{a}))+Z_{S},
\end{align*}
where the  equality holds if $(0, \mu(\overline{a})) \in S$ satisfying $\lambda(Z_{R})= Z_{S}$ which proves our second assertion.
\end{proof}
The following Theorem plays an important role in proving the main result of this section. This result was first proved by Moneyhun for Lie algebras \cite{Moneyhun1994}.
\begin{theorem}\label{Th1}
Let $L$ and $M$ be two finite dimensional stem Lie superalgebras. Then $L \sim M$ if and only if $L \cong M$.
\end{theorem}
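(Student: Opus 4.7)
My plan is to realize both $L$ and $M$ as factor-set constructions built from the \emph{same} pair $(Z(L), L/Z(L))$ and then invoke Lemma~\ref{lem9}(2) to conclude that the two realizations are isomorphic. The ``if'' direction is immediate, since any isomorphism $L \cong M$ is in particular an isoclinism.

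Assume $L \sim M$ via an isoclinism $(\varphi, \theta)$. Both algebras being stem, $Z(L) \subseteq L'$ and $Z(M) \subseteq M'$, and Lemma~\ref{Lem4}(1) forces $\theta$ to carry $Z(L)$ isomorphically onto $Z(M)$. Apply Lemma~\ref{Lem6} to $L$ to obtain a graded section $T_L : L/Z(L) \to L$ and the factor set $r_L(\overline{a},\overline{b}) = [T_L(\overline{a}), T_L(\overline{b})] - T_L[\overline{a},\overline{b}]$ with $L \cong (Z(L), L/Z(L), r_L)$. Lemma~\ref{Lem7} yields $M \cong (Z(L), L/Z(L), r)$, where $r(\overline{a},\overline{b}) = \theta^{-1}(s(\varphi(\overline{a}), \varphi(\overline{b})))$ and $s$ is the analogous factor set on $M$ associated with a chosen graded section $T_M : M/Z(M) \to M$. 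I will moreover require $T_L(L'/Z(L)) \subseteq L'$ and $T_M(M'/Z(M)) \subseteq M'$; this can be arranged by first fixing graded complements of $Z(L)$ inside $L'$ and of $Z(M)$ inside $M'$, and then extending to graded complements of the centres in the full algebras.

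The heart of the argument is to produce a homogeneous linear map of even degree $\delta : L/Z(L) \to Z(L)$ such that $r(\overline{a},\overline{b}) = r_L(\overline{a},\overline{b}) + \delta[\overline{a},\overline{b}]$. I set
\[
\delta(\overline{c}) \,=\, T_L(\overline{c}) - \theta^{-1}\bigl(T_M(\varphi(\overline{c}))\bigr) \qquad \text{for } \overline{c} \in L'/Z(L),
\]
and extend $\delta$ by zero on a graded complement of $L'/Z(L)$ in $L/Z(L)$. Here $\theta^{-1}$ is applicable because $T_M(\varphi(\overline{c})) \in M'$ by the choice of $T_M$. Both summands in $\delta(\overline{c})$ project to $\overline{c}$ under $L \to L/Z(L)$: the first by construction of $T_L$, and the second because Lemma~\ref{Lem4}(1), combined with $T_M(\varphi(\overline{c})) + Z(M) = \varphi(\overline{c})$, forces $\theta^{-1}(T_M(\varphi(\overline{c}))) + Z(L) = \overline{c}$. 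Hence $\delta(\overline{c}) \in Z(L)$ and $\delta$ is even linear. To verify the displayed identity, the commutativity of the homoclinism diagram applied to the pair $(T_L(\overline{a}), T_L(\overline{b}))$ yields
\[
\theta([T_L(\overline{a}), T_L(\overline{b})]) = [T_M(\varphi(\overline{a})), T_M(\varphi(\overline{b}))];
\]
expanding $s$ in the definition of $r$ and using that $\varphi$ is a Lie superalgebra homomorphism on $L/Z(L)$ collapses the difference $r(\overline{a},\overline{b}) - r_L(\overline{a},\overline{b})$ to $T_L[\overline{a},\overline{b}] - \theta^{-1}(T_M(\varphi[\overline{a},\overline{b}])) = \delta[\overline{a},\overline{b}]$.

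Finally, Lemma~\ref{lem9}(2) applied with $\mu = \mathrm{id}_{L/Z(L)}$, $\nu = \mathrm{id}_{Z(L)}$, and the $\delta$ just constructed supplies a Lie superalgebra isomorphism $(Z(L), L/Z(L), r_L) \cong (Z(L), L/Z(L), r)$; composing with the earlier identifications gives $L \cong M$. The main obstacle throughout is arranging that $\delta$ both lands in $Z(L)$ and satisfies the required coboundary-type identity; both issues are resolved only because the stem hypothesis lets $\theta$ act on the centres and permits $T_L, T_M$ to be chosen so as to respect the derived subalgebras.
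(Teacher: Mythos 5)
Your proposal is correct, and its skeleton is the same as the paper's: realize $L\cong(Z(L),L/Z(L),r_L)$ via Lemma~\ref{Lem6}, realize $M\cong(Z(L),L/Z(L),r)$ via Lemma~\ref{Lem7}, and conclude by Lemma~\ref{lem9}(2). Where you genuinely diverge is the middle step. The paper introduces a fresh isoclinism $(\alpha,\beta)$ between the two factor-set algebras $R$ and $S$, extracts induced automorphisms $\mu\in Aut(L/Z(L))$, $\nu\in Aut(Z(L))$, and then defines $\delta$ rather implicitly by splitting $\beta(r(\overline a,\overline b),[\overline a,\overline b])$ into two pieces (the stray component ``$t$'' in that computation is never really controlled). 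You instead take $\mu=\nu=\mathrm{id}$, since Lemma~\ref{Lem7} has already transported the factor set through $(\varphi,\theta)$, and you exhibit $\delta(\overline c)=T_L(\overline c)-\theta^{-1}(T_M(\varphi(\overline c)))$ explicitly on $L'/Z(L)$, checking the coboundary identity $r=r_L+\delta\circ[\,\cdot\,,\cdot\,]$ directly from the compatibility $\theta([T_L(\overline a),T_L(\overline b)])=[T_M(\varphi(\overline a)),T_M(\varphi(\overline b))]$ and Lemma~\ref{Lem4}(1); the stem hypothesis enters exactly where it should, namely to define $\theta$ on $Z(L)$ and to choose $T_M$ with $T_M(M'/Z(M))\subseteq M'$ (possible because $Z(M)\subseteq M'$). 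This buys a more transparent and fully rigorous verification of the hypothesis of Lemma~\ref{lem9}(2) at the modest cost of the compatible choice of section; the analogous requirement $T_L(L'/Z(L))\subseteq L'$ that you impose is actually never used and can be dropped, since membership of $\delta(\overline c)$ in $Z(L)$ follows from both terms being representatives of $\overline c$.
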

\begin{proof}
If $L$ and $M$ are finite dimensional stem Lie superalgebras such that $L \cong M$,  then clearly $L \sim M$. Conversely, let $L \sim M$. By Lemma \ref{Lem6} we have, $L \cong (Z(L), \frac{L}{Z(L)}, r)=R$ and also using Lemma \ref{Lem7}, $M \cong (Z(L), \frac{L}{Z(L)}, s)=S$. Let $(\alpha, \beta)$ be the isoclinism between the Lie superalgebras $R$ and $S$. Certainly $Z(L) \cong Z_{R}$ and also $Z(R) \cong Z(L)$, hence $Z(R) \cong Z_{R}$. Similarly $Z(S) \cong Z_{S}$. As $Z_{R} \subseteq Z(R)$, we get $Z_{R}=Z(R)$. Let us consider the following commutative diagram;
\begin{center}
 \begin{tikzpicture}[>=latex]
\node (A_{1}) at (0,0) {\(\frac{L}{Z(L)}\times \cdots \times \frac{L}{Z(L)} \)};
\node (A_{2}) at (4,0) {\(\frac{R}{Z_R}\times \cdots \times \frac{R}{Z_R}\)};
\node (A_{3}) at (8,0) {\(R'\)};
\node (B_{1}) at (0,-2) {\(\frac{L}{Z(L)}\times \cdots \times \frac{L}{Z(L)}\)};
\node (B_{2}) at (4,-2) {\(\frac{S}{Z_S}\times \cdots \times \frac{S}{Z_S} \)};
\node (B_{3}) at (8,-2) {\(S'\)};

\draw[->] (A_{1}) -- (A_{2}) node[midway,above] {$\rho$};
\draw[->] (A_{2}) -- (A_{3}) node[midway,above] {$\theta$};
\draw[->] (B_{1}) -- (B_{2}) node[midway,below] {$\sigma$};
\draw[->] (B_{2}) -- (B_{3}) node[midway,below] {$\xi$};

\draw[->] (A_{1}) -- (B_{1}) node[midway,right] {$\alpha^n$};
\draw[->] (A_{2}) -- (B_{2}) node[midway,right] {$\omega^n$};
\draw[->] (A_{3}) -- (B_{3}) node[midway,right] {$\tau$};

\end{tikzpicture}
\end{center}

 in which
  \begin{equation*}
  \begin{split}
 \rho(\overline{a}, \overline{b}) = ((0, \overline{a})+Z_{R}, (0, \overline{b})+Z_{R}),\\
 \sigma(\overline{a}, \overline{b}) =  ((0, \overline{a})+Z_{S}, (0, \overline{b})+ Z_{S}),\\
\xi((x, \overline{a})+Z_{S}, (y,\overline{b})+Z_{S})= [(x,\overline{a}),(y, \overline{b})]=(s(\overline{a}, \overline{b}), [\overline{a}, \overline{b}]),\\
\theta((x, \overline{a})+Z_{S}, (y,\overline{b})+Z_{S})=(r(\overline{a}, \overline{b}), [\overline{a}, \overline{b}]).
 \end{split}
 \end{equation*}
 Let the map $\mu \in Aut(L/Z(L))$, be defined by $\alpha((0, \overline{a})+Z_{R})= (0, \mu(\overline{a}))+Z_{S}$, for all $\overline{a} \in L/Z(L)$. Again let $\nu \in Aut(Z(L))$ be defined by $\beta(x, 0)=(\nu(x),0)$, for all $x \in Z(L)$. Now for $\overline{a}, \overline{b} \in L/Z(L)$, consider
$$
\beta \theta((0, \overline{a})+Z_{R}, (0, \overline{b})+Z_{R}) = \beta[(0,\overline{a}),(0, \overline{b})]
$$
 and further \begin{align*}
 \xi \alpha((0, \overline{a})+Z_{R}, (0, \overline{b})+Z_{R})&=\xi((0, \mu(\overline{a}))+Z_{S}, (0, \mu(\overline{b}))+Z_{S} )\\
 &= [(0, \mu(\overline{a})), (0, \mu(\overline{b}))]\\
 &= (s((\mu(\overline{a}), \mu(\overline{b})), [\mu(\overline{a}), \mu(\overline{b})]).
 \end{align*}
 Hence, we have $\beta[(0,\overline{a}),(0, \overline{b})]
=s((\mu(\overline{a}), \mu(\overline{b})), [\mu(\overline{a}), \mu(\overline{b})])$.
 Let us define the map $\delta: L'/Z(L) \longrightarrow Z(L)$ as follows, 
 \begin{align*}
 \beta[(0, \overline{a}), (0, \overline{b})] &= \beta(r( \overline{a}, \overline{b}), [\overline{a}, \overline{b}])\\
 &=\beta(r( \overline{a}, \overline{b}), 0)+ \beta(0, [\overline{a}, \overline{b}])\\
 &=(\nu(r( \overline{a}, \overline{b})), 0)+ (\delta( [\overline{a}, \overline{b}]),t)\\
 &=\nu(r( \overline{a}, \overline{b})+\delta( [\overline{a}, \overline{b}]),t),\\
 \end{align*} 
 where $t \in L/Z(L)$ and hence, we get
 \[\nu(r( \overline{a}, \overline{b})+\delta( [\overline{a}, \overline{b}])=s((\mu(\overline{a}), \mu(\overline{b})).\]
 To apply Lemma 2.6, we may extend $\delta$ to $L/Z(L)$ by defining $0$ on the complement of $L^{'}/Z(L)$ in $L/Z(L)$. Then, we get $R \cong S$.
\end{proof}
\begin{theorem}\label{Th2}
Let $\mathcal{C}$ be an isoclinism family of finite dimensional Lie superalgebras. Then any $L \in \mathcal{C}$ can be expressed as $L= T \oplus A$ where $T$ is a stem Lie superalgebra and $A$ is some finite dimensional abelian Lie superalgebra.
\end{theorem}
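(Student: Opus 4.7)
The plan is to produce the decomposition by an internal choice of central complement. First I would pick a graded vector-space complement $A$ of $Z(L) \cap L'$ inside $Z(L)$; such a graded complement exists because $L$ is finite dimensional and both $Z(L)$ and $L'$ are graded ideals. By construction $A \subseteq Z(L)$, so $A$ is an abelian graded ideal of $L$ with $A \cap L' = 0$.

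Next, using $A \cap L' = 0$ I would enlarge $L'$ to a graded subspace $T$ of $L$ satisfying $L = T \oplus A$ (as graded vector spaces) and $L' \subseteq T$. Since $[L,L] = L' \subseteq T$, the summand $T$ is a graded subalgebra; since $A \subseteq Z(L)$, the cross-brackets $[T,A]$ vanish, so $L = T \oplus A$ is a genuine direct sum of Lie superalgebras with $A$ abelian as required.

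It remains to verify that $T$ is stem. The vanishing $[T,A] = [A,A] = 0$ yields $T' = [L,L] = L'$, and for $t \in T$ one has $[t,L] = [t,T]$, so $Z(T) = Z(L) \cap T$. The key identity is then $Z(L) \cap T = Z(L) \cap L'$: any $z \in Z(L) \cap T$ decomposes as $z = u+v$ with $u \in Z(L)\cap L' \subseteq T$ and $v \in A$, which forces $v = z - u \in A \cap T = 0$, so $z = u \in L' = T'$. Hence $Z(T) \subseteq T'$, i.e.\ $T$ is stem. Finally, Lemma~\ref{Lem1} ensures $T \sim T \oplus A = L$, so $T \in \mathcal{C}$, consistent with Lemma~\ref{Lem3}(1).

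The argument is essentially bookkeeping once the complements are chosen; the only points needing care are that every complement respects the $\mathbb{Z}_2$-grading (automatic in finite dimension) and that insisting on $L' \subseteq T$ is precisely what forces $Z(L) \cap T \subseteq L'$ rather than some larger piece of $Z(L)$. I do not anticipate a serious obstacle beyond this choice of nested complements.
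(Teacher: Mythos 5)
Your proof is correct, and its decomposition is the same one the paper uses in the second half of its argument: a graded complement $M=A$ of $Z(L)\cap L'$ inside $Z(L)$, and a graded complement $K=T$ of $A$ in $L$ chosen to contain $L'$, giving $L=T\oplus A$ with $[T,A]=0$. Where you diverge is in how stemness of the summand is established and in what machinery is invoked. The paper first passes to the quotient $L/M$, uses Lemma \ref{Lem2} to get $L\sim L/M$, checks $Z(L/M)\subseteq (L/M)'$, and then brings in Theorem \ref{Th1} to identify the internal summand $K$ with the stem algebra $T$ of the family $\mathcal{C}$ supplied by Lemma \ref{Lem3}; you instead verify directly that $T'=L'$, $Z(T)=Z(L)\cap T=Z(L)\cap L'\subseteq T'$, which is a clean linear-algebra check and needs no isoclinism at all (your appeal to Lemma \ref{Lem1} only confirms $T\in\mathcal{C}$, which the statement does not require). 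Your route is therefore more elementary and slightly more general: it proves that \emph{every} finite dimensional Lie superalgebra splits as stem $\oplus$ central abelian, with the isoclinism-family hypothesis playing no role. What the paper's extra detour buys is the sharper conclusion that the stem summand of every $L\in\mathcal{C}$ is isomorphic to the canonical stem algebra of the family, i.e.\ all members of $\mathcal{C}$ share the same stem part up to isomorphism; this refinement is what gets used downstream (e.g.\ in Theorem \ref{Th3}), but it is not needed for the statement as written, and your argument proves that statement completely.
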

\begin{proof}
By Lemma 1.4,  $\mathcal{C}$ contains a stem Lie superalgebra say $T$. Further $T \sim  T \oplus A $ for some abelian Lie superalgebra $A$ (Lemma \ref{Lem1}). Consider $L \in \mathcal{C}$ be any arbitrary Lie superalgebra and we have  $Z(L) \cap L'$ is a graded ideal of $L$. Let $M= L_{\overline{0}} \cap M \oplus L_{\overline{1}} \cap M $ be a complementary $\mathbb{Z}_{2}$-graded vector subspace to $Z(L) \cap L'$ in $Z(L)$. So, 
$$Z(L)= M \oplus Z(L) \cap L'$$ and clearly  $[M, L] \subseteq M$ implies $M$ is an ideal of $L$. Assume $L/M=T$ and as $Z(L)$ is direct sum of $Z(L)\cap L'$ and $M$ we have $M \cap L'=0$. By Lemma \ref{Lem2}, we get $L \sim L/M$. Now,
\begin{align*}
Z(T) = Z(\frac{L}{M})&= \frac{Z(L)}{M}\\
&\subseteq \frac{L'+M}{M}\cong \left(\frac{L}{M}\right)'=T'.
\end{align*}
Hence $T$ is a stem Lie superalgebra. As $M$ is an ideal of $L$ and $M \cap L' =0$, we get a $\mathbb{Z}_{2}$-graded vector subspace $K$ of $L$ containing $L'$  complementary to $M$. Now, $$[K, L] \subseteq [L, L]=L' \subseteq K$$ implies $K$ is an ideal. Further, $$ L \sim \frac{L}{M} \cong \frac{K \oplus M}{M} \cong K.$$ It is easy to check $K$ is a stem Lie superalgebra. Finally $T \sim L$ and $ L \sim K$ implies $T \sim K$. Using Theorem \ref{Th1}, $T \cong K$. Specifically $L= K \oplus M \cong T \oplus M$ as required.
\end{proof}
\begin{theorem}\label{Th3}
If $L$ and $K$ be two Lie superalgebra with same dimension. Then $L \sim K$ if and only if $L \cong K$.
\end{theorem}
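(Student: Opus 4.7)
The forward direction is immediate: if $L \cong K$ via an isomorphism $f$, then $f$ descends to isomorphisms $\bar f : L/Z(L) \to K/Z(K)$ and $f|_{L'} : L' \to K'$, and the pair $(\bar f, f|_{L'})$ is visibly an isoclinism. So the content lies entirely in the converse.

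For the converse, assume $L \sim K$ with $\dim L = \dim K$. The plan is to reduce to the stem case via Theorem \ref{Th2} and then apply Theorem \ref{Th1}. Concretely, since $L$ and $K$ belong to the same isoclinism family $\mathcal{C}$, I invoke Theorem \ref{Th2} to decompose
\begin{equation*}
L = T_1 \oplus A_1, \qquad K = T_2 \oplus A_2,
\end{equation*}
where $T_1, T_2$ are stem Lie superalgebras and $A_1, A_2$ are abelian Lie superalgebras. By Lemma \ref{Lem1}, $T_i \sim T_i \oplus A_i$, so $T_1 \sim L \sim K \sim T_2$. Since $T_1$ and $T_2$ are both stem and finite dimensional (their finite dimensionality follows from the finiteness of $L$ and $K$), Theorem \ref{Th1} gives $T_1 \cong T_2$.

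It remains to prove $A_1 \cong A_2$. From Lemma \ref{Lem3}(2), the stem members of $\mathcal{C}$ all have the same (minimal) superdimension, so $\dim T_1 = \dim T_2$ as $\mathbb{Z}_2$-graded vector spaces. Combined with the hypothesis $\dim L = \dim K$ and the direct sum decompositions, this forces $\dim A_1 = \dim A_2$ in each graded component. Since any two abelian Lie superalgebras of equal superdimension $(p \mid q)$ are isomorphic (both are just the $\mathbb{Z}_2$-graded vector space $\mathbb{F}^{p|q}$ with zero bracket), we obtain $A_1 \cong A_2$. Assembling the pieces, $L = T_1 \oplus A_1 \cong T_2 \oplus A_2 = K$.

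The main potential obstacle is bookkeeping of the $\mathbb{Z}_2$-grading: one must verify that the complement $M$ produced in the proof of Theorem \ref{Th2} is genuinely a graded subspace (so that the decomposition $L = T \oplus A$ is an honest decomposition of Lie superalgebras respecting parity), and that the equality of superdimensions $\dim L = \dim K$ is preserved through each reduction step so that the comparison $\dim A_1 = \dim A_2$ holds componentwise, not merely in total dimension. Once the graded nature of the decomposition is secured, the rest is a direct application of the earlier results.
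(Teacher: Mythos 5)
Your proof is correct and follows essentially the same route as the paper: decompose $L$ and $K$ via Theorem \ref{Th2} into stem plus abelian parts, apply Theorem \ref{Th1} to get the stem parts isomorphic, and match the abelian parts by (super)dimension. Your extra care in citing Lemma \ref{Lem1} for $T_i \sim L$ and in checking the graded dimensions componentwise merely makes explicit what the paper leaves implicit.
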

\begin{proof}
Consider Lie superalgebras $L$ and $K$ with $L \sim K$. By Theorem \ref{Th2}, write $L= T \oplus M$ and $K= T_{1} \oplus M_{1}$ for stem Lie superalgebras $T, T_{1}$ and abelian Lie superalgebras $M, M_{1}$ respectively. As $T \sim T_{1}$  by Theorem \ref{Th1} $T \cong T_{1}$. Since, $L$ and $K$ are of same dimension, so $\dim M= \dim M_{1}$ and $M \cong M'$, which implies $T \oplus M \cong T_{1} \oplus M_{1}$. Therefore $L \cong K$. Converse is obvious.
\end{proof}
 Below is an example which shows that, two isoclinic Lie superalgebras of different dimension may not be isomorphic.
\begin{example}
Let $L=L_{\overline{0}}+L_{\overline{1}}$ be a $(2|1)$ dimensional Lie superalgebra with the basis $\{e_{1},e_{2},e_{3}\}$ and the commutator relations are defined by;
\[[e_{1},e_{2}]=e_{1},~~[e_{3},e_{3}]=e_{2},\]
and all other commutator relations are zero. Then $L'=<e_{1},e_{2}>$ and $Z(L)=0$ and hence, $L/Z(L) \cong L$.\\

Now, let $M=M_{\overline{0}}+M_{\overline{1}}$ be a $(3|1)$ dimensional Lie superalgebra with the basis $\{e_{1},e_{2},e_{3},e_{4}\}$ and  the commutator relations are defined by;
\[[e_{1},e_{2}]=e_{1},~~[e_{4},e_{4}]=e_{2},\]
and all other commutator relations are zero. Then $M'=<e_{1},e_{2}>$ and $Z(M)=\{e_{3}\}$ and hence, $M/Z(M)=\{ \overline{e_{1}},\overline{e_{2}},\overline{e_{4}}\}$, where $\overline{e_{i}}=e_{i}+Z(M)$, for $i=1,2,4$.\\

It is easy to verify that $L' \cong M'$ and $L/Z(L) \cong M/Z(M)$. Hence, one can deduce $L\sim M$ while $dim(L)\neq dim(M)$.
\end{example}
\section{Covers of finite dimensional Lie superalgebras}\label{sec3}

Here we show that for finite dimensional Lie superalgebras covers are isomorphic, using isoclinism. Atfirst we define the notation of universal element.
\begin{definition}
Let $L$ be a Lie superalgebra. Let $$ C(L)= \{(K, \lambda)| \lambda \in Hom(K, L), ~\lambda~ is ~onto ~and~ \mbox{Ker}(\lambda) \subseteq K' \cap Z(K)\},$$ for any Lie superalgebra $K$. The element $(T, \sigma) \in C(L)$ is called a universal element if there exists a map $\tau \in Hom(T, K)$ satisfying $\lambda \circ \tau = \sigma$ , ~ $\forall ~ (K, \lambda) \in C(L)$.
\end{definition}
\begin{lemma}\label{Lem10}
Let $K=K_{\overline{0}} \oplus K_{\overline{1}}$ be a Lie superalgebra of $\dim K=(m|n)$, then $Z(K) \cap K'$ is contained in any maximal Lie superalgebra of $K$.
\end{lemma}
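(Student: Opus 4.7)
The plan is a short contradiction argument resting on the fact that elements of $Z(K)\cap K'$ kill every bracket they appear in. Let $M$ be a maximal sub-superalgebra of $K$ (the finite-dimensionality assumption $\dim K=(m\mid n)$ guarantees such an $M$ exists), and suppose, toward a contradiction, that $Z(K)\cap K'\not\subseteq M$.

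First I would form $N := M + (Z(K)\cap K')$. As recalled in Section~\ref{sec1}, $Z(K)$ is a graded ideal, and clearly $K'$ is graded as well, so $Z(K)\cap K'$ is a $\mathbb{Z}_2$-graded subspace of $K$; hence $N$ is a $\mathbb{Z}_2$-graded subspace, and the assumption gives $M\subsetneq N$. Next I would check $N$ is closed under the bracket: for homogeneous $m_1,m_2\in M$ and $z_1,z_2\in Z(K)\cap K'$, centrality of the $z_i$ collapses every cross term,
\[
[m_1+z_1,\,m_2+z_2]=[m_1,m_2]+[m_1,z_2]+[z_1,m_2]+[z_1,z_2]=[m_1,m_2]\in M,
\]
so $N$ is a sub-superalgebra strictly containing $M$. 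By maximality of $M$ this forces $N=K$.

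Substituting this identity into the derived superalgebra and applying the same centrality observation,
\[
K'=[K,K]=\bigl[M+(Z(K)\cap K'),\,M+(Z(K)\cap K')\bigr]\subseteq [M,M]\subseteq M.
\]
In particular $Z(K)\cap K'\subseteq K'\subseteq M$, contradicting the starting hypothesis. Hence no such $M$ can omit $Z(K)\cap K'$, which is exactly the claim.

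The proof has essentially no hard step; the only care required is keeping track of the $\mathbb{Z}_2$-grading, and this is automatic because both $M$ and $Z(K)\cap K'$ are graded. The centrality of $Z(K)\cap K'$ is doing all the work — it makes $N$ automatically a sub-superalgebra and makes $K'$ automatically land inside $M$ once $K=N$.
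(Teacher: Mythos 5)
Your proof is correct and follows essentially the same route as the paper: form $M+(Z(K)\cap K')$, note that centrality makes it a graded subalgebra, invoke maximality, and observe that the case where the sum is all of $K$ forces $K'\subseteq M$ and hence $Z(K)\cap K'\subseteq M$ anyway. No substantive difference from the paper's argument.
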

\begin{proof}
Let $W=K_{\overline{0}} \cap W \oplus K_{\overline{1}} \cap W $ be any maximal subalgebra of $K$. There is an natural gradation for $Z(K) \cap K'+W$ is given by $Z(K) \cap K'+W = K_{\overline{0}} \cap (Z(K) \cap K'+ W)\oplus K_{\overline{1}} \cap (Z(K) \cap K'+ W)$. It is a subalgebra of $K$. Hence either $Z(K) \cap K'+W= W$ or $Z(K) \cap K'+W= K$.  If $Z(K) \cap K'+W= K$, then $K' = W'\subseteq W $, so $Z(K) \cap K' \subseteq W$ which is a contradiction, i.e., $Z(K) \cap K'+W= W$.
\end{proof}
\begin{lemma}\label{Lem11}
Let $L$ be a Lie superalgebra of dimension $(m|n)$. Let $(K, \lambda) \in C(L)$ and $M$ be any Lie superalgebra and $\sigma \in Hom(M, L)$ is onto. If
$\tau \in Hom(M, K)$ such that $\lambda \circ \tau = \sigma$, then $\tau$ is onto.
\end{lemma}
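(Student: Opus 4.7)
The plan is to deduce the surjectivity of $\tau$ by combining a simple sum decomposition with the maximal subalgebra property of $Z(K)\cap K'$ proved in Lemma \ref{Lem10}. First I would establish
\[
K = \tau(M) + \ker(\lambda).
\]
Given $k \in K$, since $\sigma : M \longrightarrow L$ is onto we can pick $m \in M$ with $\sigma(m) = \lambda(k)$. The compatibility $\lambda \circ \tau = \sigma$ gives $\lambda(\tau(m)) = \lambda(k)$, so $k - \tau(m) \in \ker(\lambda)$ and hence $k \in \tau(M) + \ker(\lambda)$.

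Next I would argue by contradiction: suppose $\tau(M) \subsetneq K$. Since $\tau$ is a Lie superalgebra homomorphism, and in particular a homogeneous map of even degree, $\tau(M)$ is a graded sub\-superalgebra of $K$. Because $K$ is finite dimensional in the setting at hand, $\tau(M)$ is contained in some maximal graded subalgebra $W$ of $K$. By Lemma \ref{Lem10} applied to $K$ we have $Z(K) \cap K' \subseteq W$. The hypothesis $(K,\lambda) \in C(L)$ gives $\ker(\lambda) \subseteq K' \cap Z(K) \subseteq W$, and of course $\tau(M) \subseteq W$ by choice. Combining with the first step we conclude
\[
K \;=\; \tau(M) + \ker(\lambda) \;\subseteq\; W,
\]
contradicting the properness of $W$ in $K$. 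Thus $\tau(M) = K$.

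The only subtle point, which I expect to be the main obstacle, is justifying that $K$ is finite dimensional so that a maximal graded subalgebra containing $\tau(M)$ exists. This follows from the ambient hypotheses: $\dim L = (m\mid n)$ is finite and $\ker(\lambda)$ is controlled (via the definition of $C(L)$ and the multiplier $\mathcal{M}(L)$) by a finite dimensional subspace of $Z(K)\cap K'$, so $\dim K$ is finite. Once this is in hand, the $\mathbb{Z}_2$-graded maximal subalgebra exists by a standard dimension argument, and the proof closes immediately via Lemma \ref{Lem10}.
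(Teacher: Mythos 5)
Your proof is correct and follows essentially the same route as the paper's: show $K=\tau(M)+\ker(\lambda)$, then use Lemma \ref{Lem10} to place $\ker(\lambda)\subseteq Z(K)\cap K'$ inside any maximal subalgebra containing $\tau(M)$ and derive a contradiction. Your extra care about finite dimensionality of $K$ (so that a maximal subalgebra containing $\tau(M)$ exists) is a point the paper passes over silently, but it is consistent with the finite-dimensional setting intended there.
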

\begin{proof}
Consider $(K, \lambda) \in C(L)$ and for $l \in K$, as $\sigma$ is onto, $\sigma(m)=\lambda(l)$ for some $m \in M$. Now $\sigma(m)=\lambda (\tau(m))=\lambda(l)$ which implies $\tau(m)- l \in \mbox{Ker}(\lambda)$ and moreover $K= \mbox{Ker}(\lambda)+ \mbox{Img}(\tau)$. By assumption $\mbox{Ker}(\lambda) \subseteq Z(K) \cap K'$ and by Lemma \ref{Lem10}, $Z(K) \cap K'$ is contained in every maximal subalgebra of $K$. Suppose $\mbox{Img}(\tau) \neq K$. Now, $\mbox{Img}(\tau)$ is a subalgebra of $K$ and say $\mbox{Img}(\tau) \subset N$ for some maximal subalgebra $N \neq K$. But $$K \subseteq \mbox{Img}(\tau)+ (Z(K) \cap K') \subset N,$$ i.e., $K=N$ leads to a contradiction. Hence, $\mbox{Img}(\tau)=K$ as required.
\end{proof}
If $(K, \lambda) \in C(L)$ is universal element, then for each $(M,\tau)\in C(L)$, there exists homomorphism $\rho\in Hom(K, M) $ such that $\tau \circ \rho=\lambda$. By Lemma \ref{Lem11}, we have $\rho$ is onto, hence $\dim M \leq \dim K$. This implies $K$ is a cover ( using Lemma \ref{Lem4'}). So, to show all covers for a finite dimensional Lie superalgebra $L$ are isomorphic it is enough to show universal elements for $L$ are isomorphic. 

\begin{lemma}\label{Lem12}
Let $L$ be a Lie superalgebra of dimension $(m|n)$ and $0 \longrightarrow R \longrightarrow F \longrightarrow L \longrightarrow 0$ be a free presentation of $L$. Let
$$B=\frac{R}{[F, R]}, ~~~ C= \frac{F}{[F, R]}, ~~~ D=\frac{R \cap F'}{[F, R]},$$ be quotient Lie superalgebras then
\begin{enumerate}
\item $\dim D \leq \dim(\mbox{Ker}(\lambda))$, where $(T, \lambda) \in C(L)$ is the universal element.
\item $\mbox{Ker}(\psi)$ is a homomorphic image of $D$, for every element $(K, \psi) \in C(L)$.
\end{enumerate}
\end{lemma}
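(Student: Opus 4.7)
My plan is to handle part (2) first, since the lift it produces can be recycled for part (1).

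For part (2), I start by lifting the free presentation through $\psi$. Since $F$ is free, the epimorphism $\varphi:F\to L$ of the presentation and the epimorphism $\psi:K\to L$ yield a Lie superalgebra homomorphism $\beta:F\to K$ with $\psi\circ\beta=\varphi$. Applying Lemma~\ref{Lem11} (with $M=F$, $\sigma=\varphi$, $\tau=\beta$) shows $\beta$ is onto. Since $\beta(R)\subseteq\mbox{Ker}(\psi)\subseteq Z(K)$, one has $\beta([F,R])=[\beta(F),\beta(R)]\subseteq[K,Z(K)]=0$, so $\beta$ factors through $C=F/[F,R]$ as $\bar\beta:C\to K$. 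The crucial observation is that the restriction $\bar\beta|_D$ already surjects onto $\mbox{Ker}(\psi)$: for $k\in\mbox{Ker}(\psi)\subseteq K'\cap Z(K)$, surjectivity of $\beta$ gives $K'=\beta(F')$, so $k=\beta(f)$ for some $f\in F'$; then $\varphi(f)=\psi(\beta(f))=0$ forces $f\in R\cap F'$, and $\bar\beta(f+[F,R])=k$. This exhibits $\mbox{Ker}(\psi)$ as a homomorphic image of $D$, proving part (2).

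For part (1), the inequality goes the opposite way from what part (2) supplies, so I independently produce an element of $C(L)$ whose kernel has dimension $\dim D$ and then use universality to dominate it. Choose a $\mathbb{Z}_2$-graded vector subspace $A\subseteq B$ complementary to $D$, so that $B=A\oplus D$ as graded spaces. Set $T:=C/A$ and let $\psi_T:T\to L$ be induced from the canonical map $\psi_C:C\to L$ (whose kernel is $B$). Then $\mbox{Ker}(\psi_T)=B/A\cong D$. The verification that $(T,\psi_T)\in C(L)$ is routine: $B/A\subseteq Z(T)$ because $[B,C]=[R,F]/[F,R]=0$, and $B/A\subseteq T'$ because $D\subseteq F'/[F,R]=C'$ yields $B=A+D\subseteq A+C'$, so the image of $B$ in $T=C/A$ lies in $T'=C'/A$.

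To finish, apply the universal property of $(T,\lambda)$ to the constructed pair $(T_0,\psi_{T_0})$ (relabeling so as not to clash with the universal $T$): there exists $\rho\in Hom(T,T_0)$ with $\psi_{T_0}\circ\rho=\lambda$, and Lemma~\ref{Lem11} forces $\rho$ to be surjective. Hence $\dim T\geq\dim T_0$, and subtracting $\dim L$ from both sides gives $\dim\mbox{Ker}(\lambda)\geq\dim(B/A)=\dim D$. The main obstacle in the write-up is the bookkeeping around the $\mathbb{Z}_2$-grading — picking $A$ so that it is a graded subspace complement, and then verifying cleanly that the quotient $T_0=C/A$ really inherits the stem-extension conditions $B/A\subseteq T_0'\cap Z(T_0)$; beyond that, both parts are just successive applications of Lemmas~\ref{Lem10} and~\ref{Lem11} to the natural lift from the free presentation.
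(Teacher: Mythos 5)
Your proof is correct, and for part (1) it is essentially the paper's argument: your graded complement $A$ of $D$ in $B$ is the paper's $E$, your $T_0=C/A$ is the paper's $C/E$, and the bound $\dim D\leq\dim\mbox{Ker}(\lambda)$ comes from membership of this quotient in $C(L)$ together with universality and Lemma \ref{Lem11}, exactly as in the paper. For part (2) you use the same lift $\beta:F\to K$ from the free presentation, but you obtain its surjectivity by invoking Lemma \ref{Lem11} directly, whereas the paper reruns an inline Frattini-subalgebra argument; moreover, your elementwise computation showing $\bar\beta(D)=\mbox{Ker}(\psi)$ (every $k\in\mbox{Ker}(\psi)\subseteq K'=\beta(F')$ has a preimage in $F'\cap R$) quietly supplies the justification for the paper's unexplained step $\rho(F'\cap R)=\rho(F')\cap\rho(R)$, so your write-up is, if anything, slightly cleaner at that point.
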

\begin{proof}
Clearly $[B, C]=0$, so $B \subseteq Z(C)$ is an graded ideal of $C$. Let $E$ be complementary superspace to $D$ in $B$, i.e., $B= D \oplus E$. As, $E \subseteq B \subseteq Z(C)$, so $E$ is a graded ideal of $C$. We intend to show $(C/E, \overline{\sigma}) \in C(L)$ where $\sigma: C \longrightarrow L$ is given by $\sigma(a+[F, R])= a+R $, for all $a \in F$. Evidently $\sigma $ is well defined onto map and also is a homogeneous linear map of even degree. Further for homogeneous elements $a+[F, R], b+[F, R]$, where $a, b \in F$, consider $\sigma([a+[F, R], b+[F, R]])= [a, b]+R=[a+R, b+R]$. It shows that $\sigma$ is a homomorphism. Now, $\sigma$ induces  the map \[ \overline{\sigma}:C/E \longrightarrow L \]
given by  \[~~~~~~\overline{\sigma}(x+E)=\sigma(x),\]
as $\sigma(B)=R$. So, $\overline{\sigma}$ is a surjective Lie superalgebra homomorphism. Now, $\mbox{Ker} (\overline{\sigma})= \{x+E ~|  ~ \overline{\sigma}(x+E)=R \}$. Here $x \in C$, say $x=b+[F, R]$ for $b \in F$. Finally, we get $\mbox{Ker} (\overline{\sigma})= \{b+[F, R]+E ~|~ b \in R\}=B/E$. 
Moreover, $$\mbox{Ker}(\overline{\sigma})=B/E \subseteq \frac{Z(C)+E}{E} \subseteq Z(\frac{C}{E})$$ and $$\mbox{Ker}(\overline{\sigma})=B/E=\frac{D \oplus E}{E} \subseteq \frac{C'+E}{E}=\left(\frac{C}{E}\right)^{'}.$$ Hence $C/E \in C(L)$. We have $(T, \lambda)$ is the universal element, by Lemma \ref{Lem11} $\dim(C/E) \leq \dim T$. So,

\begin{align*}
\dim(D) &= \dim \left( \frac{D \oplus E}{E}\right)=\dim(\frac{B}{E})\\
&=\dim(\frac{C}{E})- \dim(L)\\
& \leq \dim T-\dim L\\
&= \dim L + \dim \mbox{Ker}(\lambda)- \dim L= \dim \mbox{Ker}(\lambda),
\end{align*}
which proves our first assertation.
\par
Let $(K, \psi)$ be an element of $C(L)$. Now consider the canonical homomorphism $\pi$ from $F$ onto $L$, so $\mbox{Ker}(\pi)=R$. Since $F$ is free Lie superalgebra, there exists a homomorphism $\rho: F \longrightarrow K$ such that $\psi \circ \rho= \pi$ and we claim, $\rho$ is onto. Let $X$ be a $\mathbb{Z}_{2}$- graded set and the free Lie superalgebra $F$ is generated by $X$. Let us denote $ W =<\{\rho(x)| x \in X\}>$, then $K=<\{W+\mbox{Ker}(\psi)\}>$. Consider the Frattini subalgebra $\mathfrak{F}(K)$ of $K$ and let $\mbox{Ker}(\psi) \nsubseteq \mathfrak{F}(K)$. Then there exists a maximal subalgebra $M$ of $K$ and a non-zero homogeneous element $y \in \mbox{Ker}(\psi)\setminus M$. Hence $M + <y>= K$ and as $\mbox{Ker}(\psi) \subseteq Z(K) \cap K'$, so $K'=M'$. On the other hand, $y \in \mbox{Ker}(\psi) \subseteq M' \subseteq M$ which is a contradiction. Hence $\mbox{Ker}(\psi) \subseteq \mathfrak{F}(K)$ which implies $K=<W>$, i.e., $\rho$ is onto and $\rho(R)= \mbox{Ker} (\psi)$. So, $\rho[F, R]= [\rho(F), \rho(R)]=[K,\mbox{Ker}(\psi)]=0$. Now, let \[ \rho^{*}:\frac{F}{[F, R]} \longrightarrow K \] be the map
given by,  \[~~~~~~\rho^{*}(a+[F, R])= \rho(K).\] We have 
\begin{align*}
\rho^{*}(D) &= \rho^{*}(\frac{F' \cap R}{[F, R]})\\
&= \rho(F' \cap R)= \rho(F') \cap \rho(R)\\
&= K' \cap \mbox{Ker}(\psi)= \mbox{Ker}(\psi).
\end{align*}
\end{proof}

\begin{corollary}
Let $L$ be finite dimensional Lie superalgebra and $(T, \lambda)$ be the universal element, then $$
\mbox{Ker}(\lambda) \cong \frac{F' \cap R}{[F, R]}=D.$$
\end{corollary}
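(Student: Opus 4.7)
The plan is to combine the two conclusions of Lemma \ref{Lem12} and use a dimension-count to upgrade the surjection of part (2) to an isomorphism. Concretely, I would proceed in three short steps.

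First, I would apply part (2) of Lemma \ref{Lem12} to the universal element $(T,\lambda)\in C(L)$ itself. This produces a surjective homomorphism $\rho^{*}\colon D\twoheadrightarrow \mbox{Ker}(\lambda)$, exactly as constructed in the proof of Lemma \ref{Lem12} (the map $\rho^{*}(a+[F,R])=\rho(a)$ arising from a lift $\rho\colon F\to T$ of the canonical projection $\pi\colon F\to L$ through $\lambda$). In particular this already shows $\dim \mbox{Ker}(\lambda)\le\dim D$.

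Second, I would invoke part (1) of Lemma \ref{Lem12}, which gives the reverse inequality $\dim D\le\dim \mbox{Ker}(\lambda)$. Since $T$ is finite dimensional (as $(T,\lambda)$ is a cover of the finite dimensional $L$ by Lemma \ref{Lem4'} and the discussion following Lemma \ref{Lem11}), $\mbox{Ker}(\lambda)$ is finite dimensional, and the two inequalities force the equality $\dim D=\dim \mbox{Ker}(\lambda)<\infty$.

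Finally, $\rho^{*}$ is a surjective homogeneous linear map of even degree between two finite dimensional $\mathbb{Z}_2$-graded vector spaces of the same dimension, hence a linear isomorphism; being already a Lie superalgebra homomorphism it is a Lie superalgebra isomorphism. This yields
\[
\mbox{Ker}(\lambda)\cong D=\frac{F'\cap R}{[F,R]},
\]
as claimed. There is essentially no obstacle here: the whole content is packaged in Lemma \ref{Lem12}, and the only thing to check carefully is that the gradations match so that $\rho^{*}$ is genuinely a superalgebra isomorphism, which is immediate from the fact that $\rho$ and the projection $F\to F/[F,R]$ are both homogeneous of even degree.
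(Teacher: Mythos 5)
Your argument is correct and is exactly the intended one: the paper states this corollary as an immediate consequence of Lemma \ref{Lem12}, combining the surjection $\rho^{*}\colon D\twoheadrightarrow \mbox{Ker}(\lambda)$ from part (2) with the dimension bound $\dim D\le\dim\mbox{Ker}(\lambda)$ from part (1) to force the surjection to be an isomorphism. Nothing further is needed.
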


\begin{theorem}\label{Th4}
If $(T, \lambda)$ is a universal element of the Lie superalgebra $L$, then $T \sim \frac{F}{[F, R]}$.
\end{theorem}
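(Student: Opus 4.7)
The plan is to identify $T$ with a particular central quotient of $C := F/[F,R]$ and then invoke Lemma \ref{Lem2} to slide from that quotient back up to $C$. I keep the notation of Lemma \ref{Lem12}: $B = R/[F,R]$, $D = (F'\cap R)/[F,R]$, and $E$ a $\mathbb{Z}_2$-graded complement of $D$ in $B$, so that $B = D \oplus E$. The proof of Lemma \ref{Lem12} already exhibits $(C/E, \overline{\sigma}) \in C(L)$, and the Corollary to Lemma \ref{Lem12} gives $\dim \ker(\lambda) = \dim D$.

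First, I will show $T \cong C/E$. Since $(T,\lambda)$ is universal, there exists $\tau \in \mathrm{Hom}(T, C/E)$ with $\overline{\sigma}\circ\tau = \lambda$, and Lemma \ref{Lem11} forces $\tau$ to be surjective. A dimension count finishes the job:
\begin{equation*}
\dim T = \dim L + \dim\ker(\lambda) = \dim L + \dim D,
\end{equation*}
while, from $\ker(\overline{\sigma}) = B/E$, we have $\dim(C/E) = \dim L + \dim(B/E) = \dim L + \dim D$. Hence $\tau$ is an isomorphism, so $T \cong C/E$.

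Second, I will show $C \sim C/E$ via Lemma \ref{Lem2}. The key point is that $E$ is a graded ideal of $C$ whose intersection with $C'$ is zero. Since $[B, C] = [R, F]/[F, R] = 0$, we have $B \subseteq Z(C)$, so $E \subseteq Z(C)$ is automatically a graded ideal. For the derived part, note that $C' = F'/[F,R]$ because $[F,R] \subseteq F'$, and therefore $B \cap C' = (R \cap F')/[F,R] = D$. Since $E \subseteq B$ and $E \cap D = 0$, we conclude $E \cap C' = 0$. By Lemma \ref{Lem2} (with the graded ideal $E$ in $C$), we obtain $C \sim C/E$.

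Combining the two steps, $T \cong C/E \sim C = F/[F,R]$, hence $T \sim F/[F,R]$. The only potentially delicate step is the dimension count forcing $\tau$ to be an isomorphism, which relies on the Corollary to Lemma \ref{Lem12}; everything else is a direct translation of structural properties of the free presentation into the hypotheses of Lemma \ref{Lem2}.
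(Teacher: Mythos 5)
Your proof is correct, but it follows a genuinely different route from the paper's. The paper works with the epimorphism $\rho^{*}\colon F/[F,R]\longrightarrow T$ constructed in the proof of Lemma \ref{Lem12}(2) (applied to $(K,\psi)=(T,\lambda)$): it notes $\mathrm{Ker}(\rho^{*})\subseteq R/[F,R]$, deduces $\mathrm{Ker}(\rho^{*})\cap\bigl(F/[F,R]\bigr)'=\mathrm{Ker}(\rho^{*})\cap D=0$ (which rests, implicitly, on the same equality $\dim D=\dim\mathrm{Ker}(\lambda)$ that you invoke via the Corollary), and then concludes with Lemma \ref{Lem2'}. You instead exploit the universality of $(T,\lambda)$ in the outgoing direction: the element $(C/E,\overline{\sigma})\in C(L)$ from Lemma \ref{Lem12}, Lemma \ref{Lem11}, and the Corollary give a surjection $\tau\colon T\longrightarrow C/E$ between superspaces of equal finite dimension, hence $T\cong C/E$; then the forward part of Lemma \ref{Lem2}, applied to the central graded ideal $E$ with $E\cap C'=E\cap D=0$, gives $C\sim C/E$. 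Your version buys the sharper intermediate statement $T\cong C/E$, identifying the universal element explicitly as the quotient of $F/[F,R]$ by a graded complement of $D$ in $R/[F,R]$, and it only needs Lemma \ref{Lem2} rather than Lemma \ref{Lem2'}; the paper's version is shorter because the surjection $\rho^{*}$ and the computation $\rho^{*}(D)=\mathrm{Ker}(\lambda)$ are already in hand from Lemma \ref{Lem12}, and it makes the dimension bookkeeping less prominent. Both arguments depend on the finite-dimensionality of $D\cong\mathcal{M}(L)$ and of $\mathrm{Ker}(\lambda)$, which is implicit throughout this section, so on that point you are on the same footing as the paper.
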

\begin{proof}
Consider the map $\rho^{*}:\frac{F}{[F, R]} \longrightarrow T$ and by Lemma \ref{Lem12}, $\rho^{*}$ is surjective. Let $x+[R, F] \in \mbox{Ker}(\rho)^{*}$, for $x \in F$. Then
$\rho^{*}(x+[R, F])=\rho(x)=0$, which implies $\psi(\rho(x))=\pi(x)=0$, i.e., $x \in \mbox{Ker}(\pi)= R$. We have $\mbox{Ker}(\rho^{*}) \subseteq \frac{R}{[F, R]}$, then 
$$\mbox{Ker}(\rho^{*}) \cap  (\frac{F}{[F,R]})^{'}=\mbox{Ker}(\rho^{*}) \cap  (\frac{R}{[F,R]})\cap  (\frac{F}{[F,R]})^{'}$$ and hence $$
\mbox{Ker}(\rho^{*}) \cap  (\frac{F' \cap R}{[F,R]})= \mbox{Ker}(\rho^{*}) \cap D=0.$$ Hence, by Lemma \ref{Lem2'} we get $T \sim \frac{F}{[F, R]}$.
\end{proof}
Finally below is the main result of this section.
\begin{theorem}\label{Th5}
Let $(T_{1}, \lambda_{1})$ and $(T_{2}, \lambda_{2})$ be two universal element of a finite dimensional Lie superalgebra, then $T_{1} \cong T_{2}$.
\end{theorem}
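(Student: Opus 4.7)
The plan is to combine three ingredients already assembled in the paper: the dimension formula coming from the Corollary after Lemma \ref{Lem12}, the isoclinism produced by Theorem \ref{Th4}, and the equivalence ``isoclinism $+$ equal dimension $\Longrightarrow$ isomorphism'' provided by Theorem \ref{Th3}. Once those three pieces are lined up, the conclusion is immediate; there is no serious obstacle left to overcome.

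First I would fix once and for all a free presentation $0 \longrightarrow R \longrightarrow F \longrightarrow L \longrightarrow 0$ of $L$, and set $D = (F' \cap R)/[F,R]$ as in Lemma \ref{Lem12}. Applying the Corollary of that lemma to each universal element separately, I obtain $\mbox{Ker}(\lambda_{1}) \cong D \cong \mbox{Ker}(\lambda_{2})$. Since each $\lambda_{i}\colon T_{i} \longrightarrow L$ is surjective, this yields
\[
\dim T_{i} \;=\; \dim L + \dim \mbox{Ker}(\lambda_{i}) \;=\; \dim L + \dim D,
\]
so $\dim T_{1} = \dim T_{2}$. (Finite dimensionality of $D$ follows from finite dimensionality of $L$ via the standard identification of $D$ with the multiplier $\mathcal{M}(L)$.)

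Next I would invoke Theorem \ref{Th4}: since both $(T_{1},\lambda_{1})$ and $(T_{2},\lambda_{2})$ are universal elements of $L$, we have $T_{1} \sim F/[F,R]$ and $T_{2} \sim F/[F,R]$. Because isoclinism is an equivalence relation (symmetry and transitivity were noted right after the definition of isoclinism), these two relations compose to give $T_{1} \sim T_{2}$.

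Finally I would apply Theorem \ref{Th3} to the finite dimensional Lie superalgebras $T_{1}$ and $T_{2}$: they are isoclinic and of the same dimension, hence isomorphic. This closes the argument. The only step that might require a brief comment is the dimension count, because one has to be explicit that the Corollary gives $\mbox{Ker}(\lambda_{i}) \cong D$ with $D$ a fixed object depending only on the chosen free presentation of $L$, not on the universal element; after that remark the rest is a one-line chaining of the three cited results.
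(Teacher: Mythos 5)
Your proposal is correct and follows essentially the same route as the paper: Theorem \ref{Th4} gives $T_{1}\sim F/[F,R]\sim T_{2}$, the Corollary to Lemma \ref{Lem12} gives $\dim T_{1}=\dim T_{2}$, and Theorem \ref{Th3} concludes $T_{1}\cong T_{2}$. Your explicit dimension count via $\mathrm{Ker}(\lambda_{i})\cong D$ merely spells out what the paper asserts without comment.
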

\begin{proof}
By Theorem \ref{Th4}, we have $T_{1} \sim \frac{F}{[F, R]}$ and also $T_{2} \sim \frac{F}{[F, R]}$, which implies $T_{1} \sim T_{2}$. Further $\dim T_{1}=\dim T_{2}$. So, using Theorem \ref{Th3} $T_{1} \cong T_{2}$. 
\end{proof}

\end{document}